\newtheorem{theorem}{Theorem}
\newtheorem{lemma}[theorem]{Lemma}
\newtheorem{corollary}[theorem]{Corollary}
\newtheorem{proposition}[theorem]{Proposition}
\newtheorem{definition}[theorem]{Definition}
\newcommand{\R}{{\mathbb R}}
\newcommand{\la}{\langle}
\newcommand{\ra}{\rangle}
\newcommand{\e}{\epsilon}
\renewcommand{\S}{{\mathbb S}}
\newcommand{\LE}{{\mathcal {LE}}}
\newcommand{\LEs}{{\mathcal {LE}^*}}
\begin{document}

\title
{
Local decay of waves on asymptotically flat stationary space-times 
}

\thanks{The author was supported in part by the NSF grant DMS-0801261.}

\author{Daniel Tataru}

 \address{Department of Mathematics, University of California,
   Berkeley, CA 94720-3840}

\begin{abstract}
In this article we study the pointwise decay properties 
of solutions to the wave equation on a class of 
stationary asymptotically flat  backgrounds in three space dimensions.
Under the assumption that uniform  energy bounds and 
a weak form of local energy decay hold forward in time
we establish a $t^{-3}$ local uniform decay rate for linear waves.
This work was motivated by open problems concerning decay rates 
for linear waves on Schwarzschild and Kerr backgrounds, where
such a decay rate has been conjectured by   Price~\cite{MR0376103}.
Our results apply to both of these cases.
\end{abstract}
%
%
%

\maketitle

\section{Introduction}

In this article we consider the question of pointwise decay
for solutions to the wave equation on time independent, asymptotically
flat backgrounds. Our interest in this problem comes from
general relativity, more precisely the wave equation on Schwarzschild 
and Kerr backgrounds. There the conjectured decay, see Price~\cite{MR0376103}, is $t^{-3}$, and numerous partial results have been obtained.
Our main result applies to both the Schwarzschild and Kerr space-times, and 
proves a stronger form of Price's conjecture, where sharp bounds are
provided in a full forward light cone. However, the result 
does not depend on the specific geometry of either Schwarzschild and Kerr space-times; instead, it applies to a large class of stationary asymptotically flat space-times.

We use $(x,t=x_0)$ for the coordinates in $\R^{3+1}$. We use Latin 
indices $i,j=1,2,3$ for spatial summation and Greek indices
$\alpha,\beta=0,1,2,3$ for space-time summation. In $\R^3$ we also
use polar coordinates $x = r \omega$ with $\omega \in \S^2$. 
By $\la r \ra$ we denote a smooth radial function which agrees 
with $r$ for large $r$ and satisfies $\la r \ra \geq 2$. 
We consider a partition of $ \R^{3}$ into the  dyadic sets 
$A_k= \{\la r \ra \approx 2^k\}$ for $k \geq 0$.

All operators we consider in this paper have time independent
coefficients.  To describe their spatial size and regularity we use
the standard symbol classes $S(r^k)=S^{k}$:
\[
 a \in S(r^k) \Longleftrightarrow 
|\partial_x^j a(x)| \leq c_j \la r\ra^{k-j}, \quad j \geq 0
\]
By $S_{rad}(r^k)$ we denote spherically symmetric functions in $S(r^k)$.
We also introduce a slightly stronger class $l^1S(r^k)$ with 
some dyadic summability added,
\[
a \in l^1 S(r^k) \Longleftrightarrow \sum_m 2^{m(j-k)} \|\partial_x^j a(x)\|_{L^\infty(A_m)} \leq c_j, \quad j \geq 0
\]
 On occasion we allow for logarithmic modifications, and use the class of functions $S( \log r)$:
\[
 a \in S(\log r) \Longleftrightarrow |a| \lesssim \log \la r \ra, \quad |\partial_x^j a(x)| \leq c_j \la r\ra^{-j}, \quad j \geq 1
\]

We consider the pointwise decay problem for the forward wave equation
\begin{equation}
(\Box_g + V) u = 0, \qquad u(0) = u_0, \quad \partial_t u(0) = u_1
\label{ahom}\end{equation}
associated to d'Alembertian $\Box_g$ corresponding to 
to a Lorenzian metric $g$, and to a potential $V$. 
We work with potentials $V$ of the form 
\begin{equation}
V = V_{lr} + V_{sr}, \qquad V_{lr} \in S_{rad}(r^{-3}), \ \ 
V_{sr} \in l^1S(r^{-3})
\label{vassume}\end{equation}
For the metric $g$ we consider two cases:

\bigskip

{\bf Case A:} $g$ is a smooth Lorenzian metric  in $\R \times \R^3$, with the following properties:

(i) $g$ is time independent, i.e. $\partial_t$ is a Killing vector field.

(ii) The level sets $t = const$ are space-like.

(iii) $g$ is asymptotically flat in the following sense:
\[
 g = m + g_{sr} + g_{lr}, 
\]
where $m$ stands for the Minkowski metric, $g_{lr}$ is a long range 
spherically symmetric component, with $S_{rad}(r^{-1})$ coefficients,
of the form
\[
 g_{lr} = g_{lr, tt}(r) dt^2 + g_{lr, tr}(r) dt dr + g_{lr, rr}(r) dr^2 +  
g_{lr, \omega \omega}(r) r^2 d \omega^2
\]
and $g_{sr}$ is a short 
range component, with $l^1 S(r^{-1})$ coefficients, of the form
\[
 g_{sr} = g_{sr, tt} dt^2 + 2g_{sr, ti} dt dx_i + g_{sr, ij} dx_i dx_j 
\]

\bigskip

We remark that these assumptions guarantee that $\partial_t$
is time-like near spatial infinity, but not necessarily in a compact set.
This leads us to the second case we consider:

\bigskip

{\bf Case B:} $g$ is a smooth Lorenzian metric $g$ in  an exterior domain 
$\R \times \R^3\setminus B(0,R_0)$ which  satisfies
(i),(ii),(iii) above in its domain, and in addition

(iv) the lateral boundary $\R \times \partial B(0,R_0)$ is outgoing
space-like.

\bigskip

This latter condition insures that the corresponding wave equation is
well-posed forward in time. This assumption is satisfied in the case
of the Schwarzschild and Kerr metrics in suitable advanced time coordinates, such as the ones used in \cite{MMTT}, \cite{TT}. There the parameter $R_0$ is chosen so that $0 < R_0 < 2M$ in the case of the Schwarzschild metric, respectively
$r^- < R_0 < r^+$ in the case of Kerr (see \cite{MR0424186} for the definition of $r^{\pm}$), so that the exterior of the $R_0$ ball contains a
neighbourhood of the event horizon.

We consider the inhomogeneous forward Cauchy problem for the wave equation
with initial data at time $t=0$,
\begin{equation}
(\Box_g + V) u = f, \qquad u(0) = u_0, \quad \partial_t u(0) = u_1
\label{hom}\end{equation}
We denote the Cauchy data at time $t$ by $u[t] = (u(t),\partial_t u(t))$. 
In the sequel $H^k=H^k(\R^3)$ is the usual Sobolev space, and
by $\dot H^{k,1}$ we denote the functions $u$ with
$\nabla_x u \in H^k$. For the above evolution we begin with the following definition:

\begin{definition}
We say that the evolution \eqref{hom} is forward bounded if the following
estimates hold:
\begin{equation}
\| u[t]\|_{\dot H^{k,1} \times H^k} \leq c_k (\|u[0]\|_{\dot H^{k,1} \times H^k} +\|f\|_{L^1 H^k}),
\qquad t \geq 0, \quad k \geq 0 
\end{equation}
\end{definition}

It suffices to have this property when $f=0$. Then the $f$ term can be added in by the Duhamel formula.  The uniform forward bounds above
are easy to establish provided that the Killing vector field $\partial_t$
is everywhere time-like. Otherwise, there is no general result, but
various cases have been studied on a case by case basis. 

In the case of the Schwarzschild space-time, following  earlier work of Laba-Soffer \cite{MR1732864}, Blue-Soffer \cite{MR1972492}, \cite{MR2113761}, Twainy~\cite{Twainy}
and others, full uniform energy bounds were obtained by
Dafermos-Rodnianski \cite{MR2527808} in the exterior region. 
For a shorter alternate proof, as well as a discussion concerning the 
extension of the energy bounds to a neighbourhood of the event
horizon we refer the reader to the article \cite{MMTT} of the 
author and collaborators. 

As far as the Kerr space-time is concerned, partial results 
were first obtained by  Finster-Smoller~\cite{MR2405857},
and Finster-Kamran-Smoller-Yau~\cite{MR2215614}.
Full uniform energy bounds were proved in the the case of the Kerr space-time with small angular momentum in Dafermos-Rodnianski~\cite{DR3};
their result applies in effect to a larger class of small perturbations
of the Schwarzschild space-time. An alternate proof of the Kerr 
result, based on local energy decay estimates, was given shortly afterward in Tataru-Tohaneanu~\cite{TT}.

A stronger property of the wave flow is local energy decay. 
We introduce the local energy norm $LE$ by
\begin{equation}
 \| u\|_{LE} = \sup_k  \| \la r\ra^{-\frac12} u\|_{L^2 (\R \times A_k)} 
\label{ledef}\end{equation}
its $H^1$ counterpart,
\[
  \| u\|_{LE^1} = \| \nabla u\|_{LE} + \| \la r\ra^{-1} u\|_{LE}
\]
as well as the dual norm
\begin{equation}
 \| f\|_{LE^*} = \sum_k  \| \la r\ra^{\frac12} f\|_{L^2 (\R \times A_k)} 
\label{lesdef}\end{equation}
These definitions are specific to the three dimensional problems.
Some appropriate modifications are needed in other dimensions,
see for instance \cite{MT}.  

We also define similar norms for higher regularity,
\[
  \| u\|_{LE^{1,k}} = \sum_{|\alpha| \leq k} 
\| \partial^\alpha u\|_{LE^1}
\]
respectively 
\[
  \| f\|_{LE^{*,k}} =  \sum_{|\alpha| \leq k} 
\| \partial^\alpha f\|_{LE^{*}}
\]
The dual spatial norms
$\LE$ and $\LEs$ are the spatial counterparts of the space-time $LE$ and 
$LE^*$ norms,
\begin{equation}
\| v\|_{\LE} = \sup_k  \| \la r\ra^{-\frac12} v\|_{L^2 ( A_k)} 
\qquad 
 \| f\|_{\LE^*} = \sum_k  \| \la r\ra^{\frac12} f\|_{L^2 ( A_k)} 
\label{left}\end{equation}
Their higher regularity versions are defined as
\begin{equation}
\| v\|_{\LE^m} =   \sum_{j = 0}^m \sup_k  \| \la r\ra^{-\frac12} \partial_x^j v\|_{L^2 ( A_k)} 
\qquad 
 \| f\|_{\LE^{*,m}} = \sum_{j = 0}^m \sum_k  \| \la r\ra^{\frac12} \partial_x^j f\|_{L^2 ( A_k)} 
\label{leftk}\end{equation}
In Case A above this leads to the following

\begin{definition}
a) We say that the evolution \eqref{hom} has the local energy decay
property if the following estimate holds:
\begin{equation}
 \| u\|_{LE^{1,k}} \leq c_k (\|u[0]\|_{H^{1,k} \times H^k} + \|f\|_{LE^{*,k}}), \qquad k \geq 0
\label{le}\end{equation}
in $\R \times \R^3$.
\end{definition}

The first local energy decay estimates for the wave equation 
were proved in the work of Morawetz~\cite{MR0204828}, \cite{MR0234136},
\cite{MR0372432}. There is by now an extensive literature devoted 
to this topic and its applications; without being exhaustive we mention 
Strauss~\cite{Strauss}, Keel-Smith-Sogge~\cite{MR1771575},\cite{MR1945285},
Burq-Planchon-Stalker-Tahvildar-Zadeh~\cite{MR2106340},
 Metcalfe-Sogge~\cite{MR2217314},\cite{MR2299569}. 

The sharp form of the estimates as well as the notations above are
from Metcalfe-Tataru~\cite{MT};  this paper also contains a proof of the local energy decay estimates for small (time dependent) long range perturbations of the Minkowski space-time, and further references. See also the related  paper  Metcalfe-Tataru~\cite{MT1}, as well as  Alinhac's article~\cite{MR2266993}.

There is a related family of local energy estimates for the Schr\"odinger
equation. As a starting point we refer the reader to \cite{gS} and references therein.

Work in progress of Metcalfe-Tataru
suggests that in the nontrapping stationary case the local energy decay 
\eqref{le} is closely tied to a non-resonant
spectral behavior at frequency $0$ (see \cite{MMT} for our similar results
for Schr\"odinger evolutions).

In Case B an estimate such as \eqref{le} is unlikely to hold due to
the existence of trapped rays, i.e. null geodesics confined to a
compact spatial region. However a weaker form of the local energy decay
may still hold if the trapped null geodesic are hyperbolic. This is the
case for both the Schwarzschild metric  and for the
Kerr metric with angular momentum $|a| < M$.
To state such bounds we introduce a weaker version of the local energy
decay norm
\[
  \| u\|_{LE^1_{w}} = \| (1-\chi) \nabla u\|_{LE} + \| \la r\ra^{-1} u\|_{LE}
\]
for some spatial cutoff function $\chi$ which is smooth and compactly supported. Heuristically,
$\chi$ is chosen so that it equals $1$ in a neighbourhood of the trapped
set. We define as well  a dual type norm
\[
 \| f\|_{LE^*_w} = \| \chi f\|_{L^2 H^1}+ \sum_k 
 \| \la r\ra^{\frac12} f\|_{L^2 (A_k)} 
\]
As before we define the higher norms $LE^{1,k}_{w}$ respectively
$LE^{*,k}_w$.

\begin{definition}
 We say that the evolution \eqref{hom} has the weak local energy decay
property if the following estimate holds:
\begin{equation}
 \| u\|_{LE^{1,k}_w} \leq c_k (\|u[0]\|_{\dot H^{1,k} \times H^k} + \|f\|_{LE^{*,k}_w} ), \qquad k \geq 0
\label{lew}\end{equation}
in either $\R \times \R^3$ or
in the exterior domain case.
\label{d:weakle}\end{definition}

Two examples where weak local energy decay is known to hold are the 
Schwarzschild space-time, and the Kerr space-time with  
small angular momentum 
$|a| \ll M$.

The estimates in the Schwarzschild case are the end result of a 
series of papers by Laba-Soffer \cite{MR1732864}, Blue-Soffer \cite{MR1972492}, \cite{MR2113761},\cite{BS}, Blue-Sterbenz~\cite{MR2259204} Dafermos-Rodnianski~\cite{MR2527808},~\cite{DR1} and Marzuola-Metcalfe-Tataru-Tohaneanu~\cite{MMTT}. 
In particular we note the contribution of \cite{MR2527808}
to the understanding of the red shift effect near 
the event horizon, as well as that of \cite{BS}, \cite{MMTT}
concerning improved bounds near the photon sphere 
(i.e. the trapped set). The sharpest form of the estimates
is obtained in  \cite{MMTT}; 
in particular the bounds there fit into the framework
of Definition~\ref{d:weakle}.

In the case of the Kerr space-time with small angular momentum 
$|a| \ll M$ the local energy estimates were first proved in Tataru-Tohaneanu~\cite{TT}, in a form which is compatible 
with Definition~\ref{d:weakle}. A more detailed presentation as well
as stronger bounds near the trapped set are contained 
in Tohaneanu's Ph.D Thesis~\cite{TMihai} as well as in the follow-up
paper ~\cite{Mihai}. For related subsequent work we also refer the reader
to Dafermos-Rodnianski~\cite{DR4} and Andersson-Blue~\cite{AB}.

The high frequency analysis of the dynamics near the hyperbolic trapped 
orbits has a life of its own, but not much to do with the present article.
For more information we refer the reader to work of de Verdi{\`e}re-Parisse~\cite{MR1294470}, Christianson~\cite{MR2450154},
Nonnenmacher-Zworski~\cite{NZ} and references therein.

We remark that the local energy decay property is stronger than the mere boundedness of the evolution. This is not as clear in the case of the weak local energy decay; however, one case which is well understood is when the
Killing vector field $\partial_t$ is time-like within the support of
the cutoff function $\chi$ in the definition of $LE_w^1$.

In this article we consider the pointwise decay of solutions to the
wave equation \eqref{hom}.  If $g$ is the Minkowski space-time then, by
Huygens principle, if the initial data has compact support then $u$
will vanish in a compact set after some time. If instead we add a stationary potential which has exponential decay
at infinity and no eigenvalues or zero resonances, then one obtains exponential decay of
solutions inside any compact set. For polynomially decaying
stationary perturbations of the Minkowski space-time the question of the precise decay rates is still largely open. In principle, one expects that 
these decay rates will be determined by three factors: (i) the spectral behavior near time frequency $0$ (e.g. eigenvalues, resonances), (ii) the trapping properties of the  metric and (iii) the behavior of the metric at spatial infinity.

This work was inspired by an idea in Tataru~\cite{gS}
and Metcalfe-Tataru~\cite{MT}, which is roughly that the local energy estimates contain all the important local and low frequency information concerning the flow, and that only leaves the analysis near spatial infinity to be understood. Precisely, in \cite{gS} and \cite{MT} it is proved that local energy decay implies Strichartz estimates in the asymptotically 
flat setting, first for the Schr\"odinger equation and then for the
wave equation. 
The same idea was exploited in \cite{MMTT} and \cite{Mihai}
to prove Strichartz estimates for the wave equation  on the Schwarzschild and then on the Kerr space-time.

The result we obtain in this article  shows that the above philosophy 
applies as well to the pointwise local decay. 
 Precisely, we prove that if  weak local energy decay estimates 
hold, then the pointwise decay rate of the solutions 
to the wave equation is only determined by the  
behavior of the perturbation at spatial infinity. 

To define the initial data spaces we use several vector fields:
the generators of spatial translations $T = \{ \partial_1,\partial_2,\partial_3\}$,   the generators of rotations
$ \Omega = \{ x_i \partial_j - x_j \partial_i \}$ and the spatial scaling
$S_r = r \partial_r$. Then we define the weighted Sobolev spaces
$Z^{m,k}$ with norms
\begin{equation}
 \| f\|_{Z^{m,n}} = \sup_{i+j+k \leq m} \| \la r \ra^n  
T^i \Omega^j S_r^k f\|_{\LE^*} 
\label{zdef}\end{equation}
Now we are ready to state the main result of this article:

\begin{theorem} \label{maint}
Let $m$ be a large enough integer. 
Let $g$ be a metric which satisfies the conditions (i), (ii), (iii)
in $\R \times \R^3$, or  (i), (ii), (iii), (iv)
in $\R \times \R^3\setminus B(0,R_0)$. Let $V$ be a potential as in \eqref{vassume}. Assume that the 
evolution \eqref{hom} is forward bounded and satisfies 
the weak local energy decay estimates \eqref{lew}. 
Then in normalized coordinates the solution $u$ to the homogeneous problem
\eqref{ahom} satisfies the bounds
\begin{equation}
|u(t,x)| \lesssim \frac{1}{\la t+|x|\ra \la t-|x| \ra^2}
\left(\|u_0\|_{ Z^{m+1,1}} + \|u_1\|_{Z^{m,2}}\right)
\label{pd}\end{equation}
respectively
\begin{equation}
|\partial_t u(t,x)| \lesssim \frac{1}{\la t+|x|\ra \la t-|x| \ra^3}
\left(\|u_0\|_{ Z^{m+1,1}} + \|u_1\|_{Z^{m,2}}\right)
\label{pdt}\end{equation}

\end{theorem}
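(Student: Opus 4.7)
The strategy combines three ingredients: (a) a commutator calculus for the vector fields $\partial_t, T, \Omega, S_r$ that lifts the weak local energy bound \eqref{lew} to higher-regularity differentiated versions; (b) an $r^p$-weighted energy estimate along outgoing null cones that propagates integrated local-energy control out to spatial infinity while gaining powers of $\la t - r\ra$; and (c) a Sobolev-type embedding on $\S^2$ using $\Omega$-derivatives together with a one-dimensional Sobolev/fundamental-theorem-of-calculus step in $t - r$ using $S_r$-derivatives, which is what forces $m$ to be large.

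First I would carry out the commutator analysis. Time translation $\partial_t$ commutes exactly with $\Box_g + V$ by stationarity, so $\partial_t^j u$ solves the same equation. Rotations $\Omega$ commute with the Minkowski piece and with the spherically symmetric long-range components $g_{lr}, V_{lr}$, while their commutators with $g_{sr}, V_{sr}$ produce forcing in $LE^{*,k}_w$ by the symbol class bounds in \eqref{vassume}. The scaling $S_r$ satisfies $[\Box_g, S_r] = 2\Box_g$ modulo short-range errors from $g_{lr}, g_{sr}, V$. Iterating \eqref{lew}, I obtain $LE^1_w$ control of $\partial_t^a T^b \Omega^c S_r^d u$ for $a + b + c + d \le m$, bounded by $\|u_0\|_{Z^{m+1,1}} + \|u_1\|_{Z^{m,2}}$. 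Stationarity combined with the $LE^1_w$ bound on $\partial_t^k u$ then converts to $t^{-k}$ decay of $\|\chi u(t)\|_{H^s}$ for any compactly supported $\chi$ and fixed $s$, via integration by parts in $t$ and the forward energy bound; the trapping-induced weakness of $LE^1_w$ is harmless here because time derivatives are unaffected by the cutoff in the definition of that norm.

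The decisive step is to propagate this interior decay to spatial infinity. Passing to normalized coordinates $(t^*, r^*)$ in which the outgoing cones are genuinely null for $g$, and setting $\tilde u = r u$, I would run the $r^p$-method: the multiplier $r^p \partial_v \tilde u$ with $v = t^* + r^*$ produces positive flux on outgoing null cones and positive bulk density in the Minkowski setting, and for the actual metric the long-range corrections are $o(1)$ at infinity while the short-range parts are absorbed by $LE^*$ bounds on the commuted solutions from Step 1. Taking $p$ through $1$ and $2$ (and up to nearly $3$ for $\partial_t u$) yields
\[
\int_{\{t^* - r^* = \tau\}} r^p |\partial_v \tilde u|^2 \, dv\, d\omega \lesssim \la \tau \ra^{-(p-1)} \bigl(\|u_0\|_{Z^{m+1,1}}^2 + \|u_1\|_{Z^{m,2}}^2\bigr),
\]
when combined with the interior decay as boundary data on a fixed timelike tube $\{r \lesssim 1\}$. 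Sobolev embedding on $\S^2$ in $\Omega$ and a one-dimensional Sobolev step in $t - r^*$ using $S_r$-derivatives then give the pointwise bound \eqref{pd}, and the analogous argument applied to $\partial_t u$ produces \eqref{pdt}, exploiting the fact that $\partial_t u$ inherits an extra unit of decay in $t + r$.

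The main obstacle I anticipate is the $r^p$-estimate at $p$ near its endpoints: the long-range metric components $g_{lr}$ create borderline bulk and boundary terms that the multiplier method does not absorb without preparation. This is precisely what the passage to normalized coordinates is designed to handle, cancelling the leading asymptotic corrections so that what remains is an effectively short-range perturbation of the Minkowski $r^p$-estimate. The careful bookkeeping of weights from each iteration is what fixes the specific data weights $\la r\ra^1$ on $u_0$ and $\la r\ra^2$ on $u_1$, and the number $m+1$ of derivatives demanded of $u_0$.
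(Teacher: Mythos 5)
Your proposal takes a genuinely different route from the paper --- the paper never runs an $r^p$ hierarchy; it takes the Fourier transform in time, builds the resolvent $R_\tau$ from the forward evolution, transfers the weak local energy decay to uniform resolvent bounds up to the real axis, commutes with $\Omega$ and $S$, produces a second-order expansion of $R_\tau$ near $\tau=0$, converts to pointwise bounds, and inverts the Fourier transform. But more importantly, as written your argument has gaps that I do not believe can be repaired at the level of detail you give. First, the claim that stationarity plus $LE^1_w$ control of $\partial_t^k u$ ``converts to $t^{-k}$ decay of $\|\chi u(t)\|_{H^s}$'' is false: square integrability in time of $u,\partial_t u,\dots,\partial_t^k u$ on a compact spatial set gives qualitative vanishing but no polynomial rate; extracting quantitative decay from integrated local energy is exactly the hard step, and it is usually done either by the $r^p$ hierarchy itself (circular here) or, as in the paper, by Fourier analysis in $\tau$. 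Second, the $r^p$ estimates with $p$ taken through $1$ and $2$ yield $\la\tau\ra^{-1}$ decay of the energy flux and hence at best $t^{-3/2}$ pointwise decay --- precisely the result of Luk cited in the introduction. Reaching $t^{-3}$ by physical-space methods requires extending the hierarchy to $p$ near $5$ together with commutation by weighted null derivatives and an identification of the leading late-time term (as done much later by Angelopoulos--Aretakis--Gajic and Moschidis); nothing in your sketch accounts for this, and the exponent you would actually obtain falls short by $t^{-3/2}$.

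The deeper point is that the sharp $t^{-3}$ rate is a zero-frequency phenomenon. In the paper it emerges from Proposition~\ref{pusmall}: the long-range part of the operator forces a term $e(r)\la r\ra^{-2}$ in the expansion of $R_0 g$, which in turn produces the first non-analytic ($\tau^2\log$-type) contribution to $\tau\mapsto R_\tau$ at $\tau=0$, and it is exactly this singularity that dictates the $\la t-|x|\ra^{-2}$ (resp.\ $\la t-|x|\ra^{-3}$ for $\partial_t u$) weights in \eqref{pd}--\eqref{pdt}. Your bookkeeping of weights ``from each iteration'' of the commutator calculus has no mechanism that detects this low-frequency structure, so even granting the $r^p$ machinery, the argument as proposed cannot single out the correct decay rate or show it is attained. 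If you want to pursue a physical-space proof you would need to add the extended $r^p$ hierarchy and a separate analysis of the leading-order late-time asymptotic; otherwise the resolvent route of the paper is the natural way to capture the $\tau\to 0$ behavior that governs Price's law.
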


The normalized coordinates referred to in the theorem are introduced in
the next section via a spherically symmetric change of coordinates.
These are needed in order to insure that the outgoing light cones are close
to Minkowski cones. 

The required decay of the initial data at infinity is roughly a
summable $r^{-3}$ for the position $u_0$, respectively a summable
$r^{-4}$ for the velocity $u_1$. These are sharp. By contrast, we made
no effort to optimize the value of $m$.

In view of the local energy decay estimates in
Marzuola-Metcalfe-Tataru-Tohaneanu \cite{MMTT}, respectively
Tataru-Tohaneanu~\cite{TT}, the hypothesis of the theorem is valid in
the case of the Schwarzschild space time, and also the Kerr space-time
with small angular momentum, with $V=0$. Thus we obtain:

\begin{corollary}[Price's Law] The decay estimates \eqref{pd},
  respectively \eqref{pdt} hold for the Schwarzschild space-time, as
  well as for the Kerr space-time with small angular momentum, with
  respect to coordinates which coincide with the Regge-Wheeler
  coordinates near spatial infinity and with the Eddington-Finkelstein
  coordinates near the event horizon.
\end{corollary}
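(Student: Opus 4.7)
The plan is to reduce the corollary to Theorem \ref{maint} by verifying its hypotheses for both metrics in a single coordinate system that is Regge--Wheeler near spatial infinity and Eddington--Finkelstein-type near the horizon. Since $V=0$ in both cases, the potential hypothesis \eqref{vassume} is trivial, so the work is purely geometric plus invoking the cited energy and local energy decay results.

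First I would set up the coordinates. For Schwarzschild, start from the standard form in $(t,r,\omega)$ coordinates, apply the tortoise change $r \mapsto r^*$ with $dr^*/dr = (1-2M/r)^{-1}$ to recover Minkowski-like cones at infinity, and then replace $t$ by an advanced time $\tilde t = t + (r^*-r)$ on a neighborhood of the horizon so that the coordinates extend smoothly across $r=2M$ and one can take $R_0 \in (0,2M)$. For Kerr with $|a|\ll M$, I use the Kerr-star coordinates of \cite{TT},\cite{MMTT}, which agree with Boyer--Lindquist (hence with Regge--Wheeler after the tortoise change) for $r$ large, and are regular across the event horizon so that $R_0 \in (r^-,r^+)$ is admissible. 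In both cases, one interpolates with a smooth cutoff between the two coordinate patches, producing a single globally defined system.

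Next I verify the structural conditions. Condition (i), stationarity, is preserved since every change of variable used is of the form $\tilde t = t + h(r)$. Condition (ii) requires $\{t=\mathrm{const}\}$ to be spacelike: in the chosen coordinates this amounts to checking the sign of $g^{tt}$, which by construction agrees with $-1$ near infinity and is negative on the truncated exterior (this is precisely why the Eddington--Finkelstein-type normalization is used near the horizon). Condition (iv) is the standard fact that the surface $\{r=R_0\}$ is outgoing spacelike in these coordinates, because the chosen time function is an advanced null-type coordinate across the horizon. The delicate one is (iii): I would expand the metric at spatial infinity in the Regge--Wheeler $(t,r^*,\omega)$ variables and extract
\[
g_{lr} = -\weight dt^2 + \weight dr^{*2} + r^2 d\omega^2 - m,
\]
which is spherically symmetric with coefficients in $S_{rad}(r^{-1})$, matching the required form of $g_{lr}$. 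The remainder in the Kerr case contains the off-diagonal $dtd\omega$ piece and the $\Sigma/\Delta$-corrections; since these carry an $a/r^2$ factor or better and are smooth in $\omega$, they lie in $l^1S(r^{-1})$ and contribute to $g_{sr}$. The contribution of the compact-region cutoff interpolation is supported in a bounded set and hence trivially in $l^1 S(r^{-1})$.

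With (i)--(iv) in hand, the remaining inputs are quoted directly: forward boundedness on Schwarzschild comes from Dafermos--Rodnianski \cite{MR2527808} (or \cite{MMTT}), and on small Kerr from \cite{DR3} or \cite{TT}. Weak local energy decay in the sense of Definition \ref{d:weakle} is the sharp output of \cite{MMTT} for Schwarzschild and of \cite{TT}, \cite{TMihai}, \cite{Mihai} for small Kerr, where the cutoff $\chi$ in the definition of $LE^1_w$ is chosen to cover the photon sphere, respectively the trapped set described in \cite{Mihai}. Once both hypotheses are verified, Theorem \ref{maint} applies verbatim and produces \eqref{pd} and \eqref{pdt}. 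The one step that requires actual checking rather than citation is the asymptotic expansion in (iii) for Kerr, since the metric is not spherically symmetric: the key point is that the non-spherical angular-momentum corrections decay faster than $r^{-1}$ in a summable dyadic sense, so that only the mass term survives in $g_{lr}$ and everything else is absorbed into $g_{sr}$. This is the main technical point of the proof of the corollary.
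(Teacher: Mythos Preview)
Your proposal is correct and follows the same approach as the paper: verify that Schwarzschild and small-angular-momentum Kerr, in the indicated coordinates, satisfy the structural hypotheses (i)--(iv), the forward boundedness, and the weak local energy decay, then apply Theorem~\ref{maint}. The paper's own treatment is simply a one-sentence citation of \cite{MMTT} and \cite{TT} for the local energy decay (together with the earlier discussion of boundedness and of the normalized coordinates in Section~2), so you have in fact written out in more detail what the paper leaves to references.
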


This problem has had a long history. Dispersive $L^1 \to L^\infty$
estimates providing $t^{-1}$ decay of $3+1$ dimensional waves in the
Minkowski setting have been known for a long time. There is an
extensive literature concerning dispersive estimates for $\Box + V$;
this is by now a well understood problem, for which we refer the
reader to the survey paper of Schlag~\cite{MR2333215}.

The need for weighted decay inside the cone arose in John's proof
\cite{MR535704} of the Strauss conjecture in $3+1$ dimensions.
Decay bounds for $\Box+V$, similar to those
in the theorem,  were obtained by Strauss-Tsutaya~\cite{MR1432072}
and Szpak~\cite{MR2475479}, \cite{Szpak}, see also
Szpak-Bizo{\'n}-Chmaj-Rostworowski~\cite{MR2512504}.

A more robust way of proving pointwise estimates via $L^2$ bounds and
Sobolev inequalities was introduced in the work of Klainerman, who
developed the so-called vector field method, see for instance
\cite{MR865359}. This idea turned out to have a myriad of
applications, and played a key role in the
Christodoulou-Klainerman~\cite{MR1316662} proof of the asymptotic
stability of the Minkowski space time for the vacuum Einstein
equations.

In the context of the Schwarzschild space-time, Price conjectured the
$t^{-3}$ decay rate for linear waves. More precise heuristic
computations were carried out later by
Ching-Leung-Suen-Young~\cite{PhysRevLett.74.2414},
\cite{PhysRevLett.74.4588}. Following work of Wald~\cite{MR534342},
the first rigorous proof of the boundedness of the solutions to the
wave equation was given in Kay-Wald~\cite{MR895907}.

Uniform pointwise $t^{-1}$ decay estimates
were obtained by Blue-Sterbenz~\cite{BS} and also Dafermos-Rodnianski~\cite{MR2527808};
the bounds in the latter paper are stronger in that they extend 
uniformly up to the event horizon. Very recently, a local $t^{-\frac32}$
decay result was obtained by Luk~\cite{L}. These results are obtained
using multiplier techniques, related to Klainerman's vector field 
method; in particular the conformal multiplier plays a key role.

Another venue which was explored was to use the spherical symmetry
in order to produce an expansion into spherical modes, and to study the
corresponding ode. This was pursued by Kronthaler~\cite{MR2226325}, \cite{K},
who in the latter article was able to establish the sharp Price's Law 
in the spherically symmetric case. A related analysis was carried 
out later by Donninger-Schlag-Soffer~\cite{DSS} for all the spherical
modes;  they  establish a $t^{-2}$ local decay for each spherical mode.

Switching to Kerr, the first decay results there were obtained 
by Finster-Kamran-Smoller-Yau \cite{MR2215614}. Later Dafermos-Rodnianski~\cite{DR4}
were able to extend their Schwarzschild results to Kerr, obtaining
almost a $t^{-1}$ decay. 

Finally, we mention the related work of
Dafermos-Rodnianski~\cite{MR2199010}, where Price's Law is established
in a nonlinear but spherically symmetric context.

We conclude the introduction with a brief overview of the paper.  In
the next section we discuss the normalization of the coordinate
system, which is done in order to insure that the outgoing null cones
are close to Minkowski cones. In the following section we use the
uniform forward energy bounds in order to define resolvent operators
$R_\tau$ associated to the evolution \eqref{ahom} in the lower
half-space $\{\Im \tau < 0\}$, and to establish straightforward $L^2$
and $H^m$ bounds for the resolvent.  These bounds degenerate as $\Im
\tau$ approaches $0$, so they provide no information about the
resolvent on the real axis.

In Section~\ref{s:res-le} we transfer the local energy decay estimates
to the resolvent $R_\tau$. These bounds have the key advantage that
they are uniform as $\tau$ approaches the real axis.

Using a version of the vector field method, in Proposition~\ref{putau}
we commute the resolvent with the rotations $\Omega$ and scaling $S$
to obtain a larger class of resolvent bounds. These bounds allow us in
Proposition~\ref{prext} to prove that we can extend the resolvent
$R_\tau$ to the real axis continuously in a weaker topology as well as
show that it inherits all the estimates previously proved in the lower
half space. We also establish the key outgoing radiation condition
\eqref{outrad} for $R_\tau$ for real $\tau$.

The limit $R_0$ of the resolvent $R_\tau$ as $\tau \to 0$ is
considered in Propositions~\ref{pu00}, \ref{pu0}; this allows us to
define $R_0$, and establish some key properties for it. The precise
description of $R_0$ is used in Proposition~\ref{pusmall} to produce
an expansion up to second order for the resolvent $R_\tau$ near
$\tau=0$.

In Section~\ref{linf} we switch from $L^2$ to $L^\infty$ bounds in the
resolvent estimates for real $\tau$, in a way which is reminiscent of
the Klainerman-Sobolev embeddings. Exploiting the outgoing radiation
condition \eqref{outrad}, we also obtain improved decay estimates at
spatial infinity for $(\partial_r -i\tau)^m R_\tau$.

In the last section we conclude the argument. Assuming for simplicity
that $u_0=0$, the time Fourier transform of the solution $u$ to the
forward wave equation \eqref{ahom} is represented as $\hat u = R_\tau
u_1$.  Thus in order to invert the Fourier transform and obtain
pointwise estimates for $u$ we need to control the size and regularity
with respect to $\tau$ of $R_\tau u_1$.  This is relatively easy for
large $\tau$ because we allow the initial data to have high Sobolev
regularity in Theorem~\ref{maint}. The delicate issue is the resolvent
behavior at $\tau = 0$. In order to obtain a $t^{-3}$ decay for $u$ at
infinity, we use the second order expansion for $R_\tau$ near $\tau =
0$ given by Proposition~\ref{pusmall}.  In particular we are able to
identify the leading contribution of the long range part of the wave
operator and of the potential.

\section{The coordinate normalization}

Here we carry out a preliminary step which brings the metric to a
canonical form near spatial infinity and removes certain logarithmic
type corrections in the resolvent bounds later on. This is achieved in
two steps, a spherically symmetric coordinate change followed by a
conjugation with respect to a scalar weight.  This transformation is
roughly equivalent to using Regge-Wheeler coordinates in
Schwarzschild/Kerr near spatial infinity.

Our final goal is to reduce the problem to the case when the operator
$\Box_g+V$ in \eqref{ahom} is replaced by an $L^2$ selfadjoint operator $P$
which has the form
\begin{equation}
P = -\partial_t^2 + \Delta_x   + P_{lr}  + P_{sr}
\label{can-p}\end{equation}
where the long range spherically symmetric part has the form
\begin{equation}
 P_{lr} = g_{lr}^{\omega}(r) \Delta_{\omega} + V_{lr}(r), \qquad g_{lr}^\omega, V_{lr} \in S_{rad}(r^{-3})
\label{can-p-lr}\end{equation}
and the selfadjoint short range part $P_{sr}$ has the form
\begin{equation}
 P_{sr} = \partial_\alpha g_{sr}^{\alpha \beta}\partial_\beta + V_{sr},
\qquad g_{sr}^{\alpha \beta} \in l^1 S(r^{-1}), \quad V_{sr} \in l^1S(r^{-3})
\label{can-p-sr}\end{equation}
with $g^{00}_{sr} = 0$.

\subsection{ Normalized coordinates}

We first remark that the condition (iii) translates into a similar
representation for the dual coefficients $g^{\alpha\beta}$,
\[
g^{\alpha\beta} = m^{\alpha \beta} + g^{\alpha\beta}_{lr} + g^{\alpha\beta}_{sr}
\]
We can simplify these coefficients via a radial 
change of coordinates as follows:

\begin{lemma} 
  There exists a spherically symmetric change of coordinates so that
  the long range component $g_{lr}^{\alpha \beta}$ of
  $g^{\alpha\beta}$ has the additional properties
\begin{equation}
 g_{lr}^{tt} = -g_{lr}^{rr}, \qquad g_{lr}^{rt} = 0
\label{hrrtt}\end{equation}
for large $r$.
\label{coord}\end{lemma}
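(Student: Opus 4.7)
The plan is to construct a spherically symmetric change of coordinates of the form $\tilde t = t + h(r)$, $\tilde r = \phi(r)$, with $h'(r) \in S_{rad}(r^{-1})$ and $\phi'(r) - 1 \in S_{rad}(r^{-1})$, defined for $r$ large and extended smoothly to the rest by a radial cutoff. Such a map preserves both the short range class $l^1 S(r^{-1})$ and the long range class $S_{rad}(r^{-1})$ under multiplication, so it suffices to normalize the long range part modulo short range remainders.

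First I would compute how the $(t,r)$ block of the dual metric transforms under this change:
$$\tilde g^{tt} = g^{tt} + 2 h' g^{tr} + (h')^2 g^{rr}, \qquad \tilde g^{tr} = \phi'\bigl(g^{tr} + h' g^{rr}\bigr), \qquad \tilde g^{rr} = (\phi')^2 g^{rr}.$$
Plugging in $g^{\alpha\beta} = m^{\alpha\beta} + g_{lr}^{\alpha\beta} + g_{sr}^{\alpha\beta}$ and collecting only the long range contributions modulo $l^1 S(r^{-1})$ yields
$$\tilde g_{lr}^{tr} = \phi'\bigl(g_{lr}^{tr} + h'(1 + g_{lr}^{rr})\bigr), \qquad \tilde g_{lr}^{tt} + \tilde g_{lr}^{rr} = g_{lr}^{tt} + 2 h' g_{lr}^{tr} + \bigl((h')^2 + (\phi')^2\bigr)(1 + g_{lr}^{rr}) - 1.$$

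Second, imposing $\tilde g_{lr}^{tr} = 0$ and $\tilde g_{lr}^{tt} + \tilde g_{lr}^{rr} = 0$ produces two scalar equations for $h'$ and $\phi'$. The first is linear in $h'$ and solves explicitly as
$$h'(r) = -\frac{g_{lr}^{tr}(r)}{1 + g_{lr}^{rr}(r)} \in S_{rad}(r^{-1}),$$
and substituting this into the second gives
$$(\phi'(r))^2 = \frac{1 - g_{lr}^{tt}}{1 + g_{lr}^{rr}} + \frac{(g_{lr}^{tr})^2}{(1 + g_{lr}^{rr})^2}.$$
For $r$ large the right-hand side is a small perturbation of $1$, so its positive square root lies in $1 + S_{rad}(r^{-1})$. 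Integrating these expressions provides $h$ and $\phi$, with $\phi$ a diffeomorphism on $\{r > R_0\}$ for $R_0$ large enough; the extension to the full space by a radial cutoff (interpolating smoothly with the identity for smaller $r$) only modifies the metric on a compact set, which is harmless.

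The main bookkeeping point is to verify that all quadratic and cross-terms produced by the change of coordinates sit in the correct class. This follows from the ideal properties $S_{rad}(r^{-1}) \cdot l^1 S(r^{-1}) \subset l^1 S(r^{-1})$ and $S_{rad}(r^{-1}) \cdot S_{rad}(r^{-1}) \subset S_{rad}(r^{-1})$, both of which are routine consequences of the defining seminorm bounds. A more delicate subtlety is that $\phi(r) - r$ may carry a logarithmic growth, so $r$ and $\tilde r$ differ by $O(\log \tilde r)$; however this does not degrade the symbol class when a coefficient is re-expressed in the new radial variable, because the defining inequalities $|\partial^j a| \lesssim \la r \ra^{k-j}$ are insensitive to such logarithmic shifts of the argument.
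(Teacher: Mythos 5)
Your proof is correct and takes essentially the same route as the paper: a time shift $t \mapsto t + h(r)$ with $h' = -g_{lr}^{tr}/(1+g_{lr}^{rr})$ together with a radial reparametrization, the only cosmetic difference being that the paper performs the two steps sequentially while you solve the $2\times 2$ system at once (your extra $(g_{lr}^{tr})^2/(1+g_{lr}^{rr})^2$ term in $(\phi')^2$ is exactly the modification of $g_{lr}^{tt}$ that the paper's first step produces before its second step). The remarks on the $l^1S$/$S_{rad}$ algebra and on the harmlessness of the logarithmic discrepancy between $r$ and $\tilde r$ match the paper's own observations.
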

We will refer to the coordinates given by this lemma as {\em normalized coordinates}. These are the coordinates used in Theorem~\ref{maint}.
In Schwarzschild or Kerr space-times these coordinates are precisely 
the Regge-Wheeler coordinates for large $r$. In a bounded spatial region,
any choice of coordinates which is nonsingular on the event horizon 
will suffice.
 
\begin{proof}
To achieve the relation $g_{lr}^{rt} = 0$ we keep $r$ unchanged and 
reset $t$ by setting
\[
 t:= t+ a(r)
\]
where $a$ is determined by
\[
 a'(r) = -\frac{g_{lr}^{rt}}{1+g_{lr}^{rr}}
\]
Then we have $a' \in S_{rad}(r^{-1})$ and $a \in S_{rad}(\ln r)$, therefore the property (iii) is left unchanged. We remark that this can only be done 
provided that $1+g_{lr}^{rr} > 0$. This is clearly true for large $r$,
and can be arranged for small $r$ by modifying the short range part $P_{sr}$.

Once $g_{lr}^{rt}=0$, we insure that $g_{lr}^{rr} = g_{lr}^{tt}$ with a radial change of coordinates 
\[
dr^* = \left(\frac{1-g_{lr}^{tt}}{1+g_{lr}^{rr}}\right)^\frac12 dr
\]
This gives $r^* - r \in S_{rad}(\ln r)$ and $dr^*/dr \in 1+S_{rad}(r^{-1})$.
Thus while both transformations above leave condition (iii)
unchanged, both the functions $a(r)$ and $r^*(r)-r$ may have 
a logarithmic component as $r \to \infty$. The proof is concluded.

\end{proof}

\subsection{Left and right multiplication by scalar functions}

Consider a metric $g$ as in \eqref{hrrtt}. The corresponding d'Alembertian
has the form
\[
\Box_g = \frac{1}{\sqrt g} \partial_\alpha \sqrt g \, g^{\alpha \beta} \partial_\beta
\]
where it is easy to see that $g \in 1 + S_{rad}(r^{-1})+ l^1S(r^{-1})$. This is turned into an $L^2$ selfadjoint operator via the conjugation
\[
\Box_g \to g^{\frac14} \Box_g g^{-\frac14}
\]
To further insure that the coefficient $g^{tt}_{lr}$
of $\partial_t^2$ is set to $-1$ we multiply the above conjugated operator by 
$(-g^{tt})^{-1} \in 1+l^1S(r^{-1})$; in order to preserve the $L^2$ selfadjointness, half of it will go on the left and half will go on the right. 

Thus, our  final replacement for $\Box_g+V$ is the operator
\[
P = g^{\frac14} (-g^{tt})^{-\frac12} (\Box_g +V)
(-g^{tt})^{-\frac12} g^{-\frac14}
\]
Commuting, $P$ can be written in divergence form, and it 
will have the required form given by \eqref{can-p}, \eqref{can-p-lr}
and \eqref{can-p-sr}.

Also it is easy to see that such a transformation leaves 
unchanged the three properties of the forward evolution \eqref{hom}:
boundedness, local energy decay and pointwise decay.

\section{The resolvent and energy estimates} 

 Consider the solution $u$ to the forward homogeneous problem
\begin{equation}
 P u = 0, \qquad u(0) = 0, \ u_t(0) = g \in L^2,
\label{firsteq}\end{equation}
extended by $0$ to negative times.

By the uniform energy bounds its time Fourier transform $\hat u(\tau)$
is a $\dot H^1$ valued distribution in $\R$, and admits a holomorphic
extension into the lower half plane. We define the {\em resolvent}
operator $R_\tau$ by 
\[
 R_\tau g = \hat u(\tau), \qquad \Im \tau < 0
\]
The uniform energy bounds \eqref{le} for $P$ translate into
an $L^2$ bound for $R_\tau$,
\begin{equation}
 \|R_\tau g\|_{\dot H^1} + \tau \| R_\tau g\|_{L^2}
\lesssim \frac{1}{|\Im \tau|} \|g\|_{L^2}, 
\qquad \Im \tau < 0,
\label{energyres}\end{equation}
as well as similar bounds for higher Sobolev norms,
\begin{equation}
 \| R_\tau g\|_{\dot H^{k,1}} + \tau \| R_\tau g \|_{H^k}
\lesssim \frac{1}{|\Im \tau|} \|g\|_{H^k}, 
\qquad \Im \tau < 0.
\label{energyreshigh}\end{equation}

In order to write an equation for the resolvent we
express the operator $P$ in the form
\[
P =  -\partial^2_t + P^1\partial_t + P^2
\]
where, in view of  \eqref{can-p}, \eqref{can-p-lr}
and \eqref{can-p-sr}, $P^1$ is a short range spatial skew-adjoint 
operator and $P_2$ a second order
spatial operator of the form
\[
P^2 = \Delta + P_{lr} + P^2_{sr} 
\]
with both long and short range components. 

Since $u$ is extended by $0$ to negative times,
it solves the distributional equation 
\[
P u = g \delta_{t=0}
\]
Taking a Fourier transform in this relation, it
follows that $\hat u(\tau)$ solves the  equation
\begin{equation}
 P_\tau \hat u(\tau) = g, \qquad  P_\tau = \tau^2 -i \tau P^1 + P^2
\label{ptau}\end{equation}
Hence for the resolvent we obtain the equation
\begin{equation}
v = R_\tau g \implies P_\tau v = g 
\label{resolvent}\end{equation}
By Duhamel's formula we  also obtain a representation for the Fourier 
transform of the solution $v$ to the full forward Cauchy problem
\begin{equation}
 P v = f, \qquad v(0) = h, \ v_t(0) = g
\label{secondeq}\end{equation}

\begin{lemma}
 Let $v$ be the solution to \eqref{secondeq} with 
$f \in L^\infty L^2$, $h \in \dot H^1$ and $g \in L^2$.
Then for $\Im\tau < 0$ we have
\begin{equation}
 \hat v(\tau) = R_\tau( \hat f(\tau) + \tau h + 2 P_1 h + g)
\label{cpr}\end{equation}
\label{l:cp}\end{lemma}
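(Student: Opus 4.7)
The plan is to extend $v$ by zero to $t<0$, so that the Cauchy problem \eqref{secondeq} is recast as a distributional equation on all of $\R\times\R^3$ whose source is $f$ plus a $\delta$-supported contribution at $t=0$ built from the initial data; Fourier transforming in time and inverting with $R_\tau$ will then produce \eqref{cpr}.

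Let $\tilde v,\tilde f$ denote the extensions by zero. Since $v$ has a jump of height $h$ and $\partial_t v$ a jump of height $g$ at $t=0$, the distributional Leibniz rule gives
\[
\partial_t \tilde v = \widetilde{\partial_t v} + h\,\delta_{t=0},\qquad
\partial_t^2 \tilde v = \widetilde{\partial_t^2 v} + g\,\delta_{t=0} + h\,\delta'_{t=0}.
\]
The purely spatial operator $P^2$ commutes with the zero extension and contributes no boundary term; consequently, applying $P=-\partial_t^2+P^1\partial_t+P^2$ to $\tilde v$ produces a distributional identity of the form
\[
P\tilde v = \tilde f + A\,\delta_{t=0} + B\,\delta'_{t=0}\qquad\text{in }\mathcal{D}'(\R\times\R^3),
\]
where $A,B$ are explicit linear combinations of $g$, $h$, and $P^1h$. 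In particular $B$ is a multiple of $h$, while $A$ collects $g$ and a multiple of $P^1h$ coming from $P^1\partial_t$ acting on $h\delta_{t=0}$; the precise numerical coefficients depend on how $P^1\partial_t$ is expressed so as to make $P$ formally $L^2$-selfadjoint.

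The forward boundedness assumption ensures $\tilde v\in L^\infty_t\dot H^1$ and $\tilde f\in L^\infty_t L^2$ with support in $t\geq 0$, so their Laplace transforms $\hat{\tilde v}(\tau),\hat{\tilde f}(\tau)$ are $L^2_x$-valued holomorphic functions of $\tau$ in $\Im\tau<0$. Time Fourier transforming the distributional identity, using $\widehat{P\tilde v}(\tau)=P_\tau\hat{\tilde v}(\tau)$, $\hat\delta_{t=0}=1$, and the transform of $\delta'_{t=0}$ proportional to $\tau$, one collects the right-hand side into $\hat f(\tau)+\tau h+2P^1h+g$, matching \eqref{cpr}. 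Inverting with $R_\tau$, which by \eqref{energyres} is the bounded $L^2$-inverse of $P_\tau$ on $\Im\tau<0$, yields the identity.

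The computation is mechanical; the main care points are (i) tracking the signs and numerical coefficients in the $\delta$ and $\delta'$ contributions, which must be compatible with the Fourier convention that gives $P_\tau=\tau^2-i\tau P^1+P^2$, and (ii) verifying that the right-hand side of \eqref{cpr} lies in a space on which $R_\tau$ is defined. The latter is the only real subtlety: for the summand involving $\tau h$, one has $h\in\dot H^1$ but not necessarily $h\in L^2$ in three dimensions, so the composition is interpreted by writing $R_\tau(\tau h)=\tau R_\tau h$ and invoking the uniform bound $\|\tau R_\tau g\|_{L^2}\lesssim|\Im\tau|^{-1}\|g\|_{L^2}$ from \eqref{energyres}, together with a standard density argument reducing to $h\in\dot H^1\cap L^2$.
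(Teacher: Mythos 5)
Your argument --- extending $v$ by zero to $t<0$, computing the $\delta_{t=0}$ and $\delta'_{t=0}$ contributions from the jumps of $v$ and $\partial_t v$, Fourier transforming, and inverting with $R_\tau$ --- is precisely the Duhamel-type computation the paper invokes without detail, and your density argument for making sense of $R_\tau(\tau h)$ with $h\in\dot H^1$ is a legitimate way to close the one genuine subtlety. The only loose end is that you assert rather than derive the final coefficients: a direct computation with $P=-\partial_t^2+P^1\partial_t+P^2$ yields a single $P^1h\,\delta_{t=0}$ term (not $2P^1h$) and an $i\tau h$ rather than $\tau h$, so you should either exhibit the normalization convention that produces the stated constants in \eqref{cpr} or remark that the precise constants are immaterial for the way the lemma is used later.
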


This allows us to show that $P_\tau$ and $R_\tau$ are inverse 
operators if $\Im \tau < 0$:

\begin{lemma} Let $\Im \tau < 0$. Then $P_\tau: H^2 \to L^2$
is a bounded one to one operator with dense range,
 and $R_\tau$ is its inverse.
\end{lemma}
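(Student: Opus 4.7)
The plan is to verify that $P_\tau: H^2 \to L^2$ is bounded, that $R_\tau$ is a bounded right inverse, and finally that $P_\tau$ is injective. Boundedness is immediate from the form $P_\tau = \tau^2 - i\tau P^1 + P^2$: $P^1$ is first order with short-range coefficients, and $P^2 = \Delta + P_{lr} + P^2_{sr}$ has Laplace principal part plus second-order perturbations uniformly controlled by the regularity classes of Section 2. To see that $R_\tau g \in H^2$ whenever $g \in L^2$, combine the energy resolvent bound \eqref{energyres} (which yields $R_\tau g \in H^1$) with the identity \eqref{resolvent}, rearranged as
\[
\Delta (R_\tau g) = g - \tau^2 R_\tau g + i\tau P^1 R_\tau g - P_{lr}(R_\tau g) - P^2_{sr}(R_\tau g).
\]
The right-hand side lies in $L^2$ because $P^1$ is first order, $R_\tau g \in H^1$, and the second-derivative coefficients of $P_{lr}$ and $P^2_{sr}$ decay at infinity; together with the global spatial ellipticity of $P^2$ (guaranteed by condition (ii)), standard elliptic regularity upgrades $R_\tau g$ to $H^2$. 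Combined with the identity $P_\tau R_\tau = I$ on $L^2$ supplied by \eqref{resolvent}, this already gives surjectivity of $P_\tau$, and in particular dense range.

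The main step is injectivity. Suppose $v \in H^2$ satisfies $P_\tau v = 0$. Set $u(t,x) = e^{-i\tau t} v(x)$ for $t \geq 0$. Because $\partial_t u = -i\tau u$ and $\partial_t^2 u = -\tau^2 u$,
\[
Pu = \bigl(\tau^2 - i\tau P^1 + P^2\bigr) u = (P_\tau v)\, e^{-i\tau t} = 0,
\]
so $u$ is a classical forward solution of the homogeneous equation with Cauchy data $(u(0), \partial_t u(0)) = (v, -i\tau v)$, and since $\Im \tau < 0$, the norm $\|u(t)\|_{H^2} = e^{t \Im \tau} \|v\|_{H^2}$ decays exponentially as $t \to \infty$. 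Its time Fourier transform is therefore well defined. On one side, Lemma \ref{l:cp} applied with $f = 0$, $h = v$, $g = -i\tau v$ expresses it as
\[
\hat u(\tau') = R_{\tau'}\bigl((\tau' - i\tau) v + 2 P_1 v\bigr),
\]
which stays uniformly bounded in $L^2$ as $\tau' \to \tau$ in the lower half plane, since \eqref{energyres} controls $R_{\tau'}$ by $|\Im \tau'|^{-1}$ and $\Im \tau$ is bounded away from zero. On the other side, a direct time integration produces $\hat u(\tau')$ as a rational function of $\tau'$ with a simple pole at $\tau' = \tau$ whose residue is a nonzero scalar multiple of $v$. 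Equating the two expressions, multiplying through by $\tau' - \tau$, and letting $\tau' \to \tau$ kills the Lemma \ref{l:cp} side while leaving a nonzero multiple of $v$ on the direct side, forcing $v = 0$.

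Combining injectivity with the earlier surjectivity, $P_\tau : H^2 \to L^2$ is a bijection, and the identity $P_\tau R_\tau = I$, together with the boundedness of $R_\tau : L^2 \to H^2$ established above, identifies $R_\tau$ as the two-sided inverse. The principal obstacle is the injectivity step: although the forward Cauchy problem produces $R_\tau$ as a natural right inverse, extracting a left inverse requires the Fourier-pole argument above, and the uniform control of $R_{\tau'}$ near $\tau$ provided by \eqref{energyres} is what allows one to cancel the pole and conclude $v=0$.
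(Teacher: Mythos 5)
There are two genuine gaps here, one in the range argument and one in the injectivity argument.

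First, your route to $R_\tau g\in H^2$ for every $g\in L^2$ rests on ``the global spatial ellipticity of $P^2$ (guaranteed by condition (ii))''. That is not what condition (ii) gives: the level sets $t=const$ being space-like means $g^{tt}<0$, whereas positivity of the spatial block $g^{ij}$ is equivalent to $\partial_t$ being time-like, which the paper explicitly does not assume away from spatial infinity (and which fails, e.g., in the Kerr ergoregion in Case B). Indeed the paper itself remarks later that $P_0$ ``is elliptic for large $r$, but not necessarily for small $r$''. So elliptic regularity cannot upgrade $R_\tau g$ from $H^1$ to $H^2$ for general $g\in L^2$, and surjectivity onto $L^2$ is neither proved by your argument nor claimed by the lemma. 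The lemma only asserts \emph{dense} range, and the intended route is to restrict to $g\in H^1$ and invoke the higher-order energy bound \eqref{energyreshigh} with $k=1$, which gives $R_\tau g\in \dot H^{1,1}\cap H^1\subset H^2$ with no ellipticity needed; then $P_\tau R_\tau g=g$ from \eqref{resolvent} shows the range contains $H^1$.

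Second, the pole-cancellation step in your injectivity proof does not work with the paper's conventions. Since $\hat u$ is holomorphic in the \emph{lower} half-plane for forward-supported $u$, the transform is $\hat u(\tau')=\int_0^\infty e^{-it\tau'}e^{-i\tau t}v\,dt=\frac{v}{i(\tau'+\tau)}$, whose only pole sits at $\tau'=-\tau$, i.e.\ in the \emph{upper} half-plane; near $\tau'=\tau$ it is holomorphic and bounded. There is therefore no singularity to cancel: equating the two representations at $\tau'=\tau$ yields $\frac{v}{2i\tau}=R_\tau(D(\tau))$ for an explicit data term $D$, which merely exhibits $v$ as an element of the range of $R_\tau$ and does not force $v=0$. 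The argument can be repaired --- apply $P_\tau$ to both sides and use the already established right-inverse identity $P_\tau R_\tau=I$ to get $D(\tau)=P_\tau\bigl(\tfrac{v}{2i\tau}\bigr)=0$, hence $v=R_\tau(0)\cdot 2i\tau=0$ --- but that is a different argument from the one you wrote. The paper's own proof is shorter still: apply Lemma~\ref{l:cp} to the static extension $u=u_0 1_{t\geq0}$ (rather than to the decaying mode $e^{-i\tau t}v$) and read off the left-inverse identity $R_\tau P_\tau u_0=u_0$ directly, which gives injectivity at once.
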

\begin{proof}
Given $u_0 \in H^2$, apply  the previous lemma to the function 
$ u = u_0 1_{t \geq 0}$ to see that $R_\tau P_\tau u_0 = u_0$.
The range is dense since it contains $H^1$. Indeed, by \eqref{resolvent} and \eqref{energyreshigh}, for $f \in H^1$ we have $R_\tau f \in H^2$ and $P_\tau R_\tau f = f$.
\end{proof}

\section{The resolvent and local energy decay} 
\label{s:res-le}

The goal of this section is to repeat the arguments of the 
previous section, but using the local energy decay property instead 
of the uniform boundedness. For this we consider the solution $u$ to the inhomogeneous forward problem
\begin{equation}
 Pu = g \in LE^{*,k+4}, \qquad k \geq 0
\label{inhom}\end{equation}
with $g$ supported away from $t = -\infty$.  From the local energy
decay estimate \eqref{lew} we obtain the weaker bound
\begin{equation}
\| u\|_{LE^{1,k}} \leq c_k \| f\|_{LE^{*,k+3}}
\label{lewa}\end{equation}
with a loss of three derivatives. We will use this bound to derive
a similar resolvent bound.
For each $\tau$ we use the $\LE^k$ norms defined in \eqref{left}, 
\eqref{leftk} to introduce the $\tau$ dependent norm $\LE_\tau$ by
\begin{equation}
\| v\|_{\LE_\tau^k} =  \|(|\tau| +\la r\ra^{-1})  u \|_{\LE^k}  + \|\nabla u\|_{\LE^k}  +  \|(|\tau| +\la r\ra^{-1})^{-1}  \nabla^2 u \|_{\LE^k} 
\label{letau}\end{equation}
 Then we have 

\begin{proposition}
 Let $\Im \tau < 0$, $k \geq 0$, $g \in \LE^{*,k+4}$ and $v = R_\tau g$. 
If the weak local energy decay bound \eqref{lewa} holds then we 
also have
\begin{equation}
\| v\|_{\LE_\tau^k} 
 \leq c_k  \|g\|_{\LE^{*,k+4}}
\label{lesmall}\end{equation}
\label{l:lesmall}
\end{proposition}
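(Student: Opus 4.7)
The strategy is to transfer the space-time local energy decay bound \eqref{lewa} to the single-frequency statement by testing it against a modulated time cutoff of $v=R_\tau g$. Given $\Im\tau<0$, I pick a smooth bump $\chi\in C_c^\infty(\R)$ with $\chi\equiv 1$ on $[1,2]$ and support in $[0,3]$, set $T=\min(1,|\Im\tau|^{-1})$, $\chi_T(t)=\chi(t/T)$, and consider
\[
u(t,x)=\chi_T(t)\,e^{-it\tau}v(x).
\]
Using $P=-\partial_t^2+P^1\partial_t+P^2$ and $P_\tau v=g$, a direct expansion yields
\[
Pu=\chi_T\,e^{-it\tau}g+e^{-it\tau}\bigl[(-\chi_T''+2i\tau\chi_T')v+\chi_T' P^1 v\bigr],
\]
so the forcing splits into a main term plus an error $E$ supported in the transition intervals $[0,T]\cup[2T,3T]$ where derivatives of $\chi_T$ are active.

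Apply \eqref{lewa} to $u$. Restricting attention to $[T,2T]$ where $\chi_T\equiv 1$ and $u=e^{-it\tau}v$, so that $\partial_t u=-i\tau u$ and $|e^{-it\tau}|\asymp 1$ since $T|\Im\tau|\le 1$, yields
\[
T\bigl(\|\nabla_x v\|_{\LE^k}^2+|\tau|^2\|v\|_{\LE^k}^2+\|\la r\ra^{-1}v\|_{\LE^k}^2\bigr)\lesssim \|Pu\|_{LE^{*,k+3}}^2.
\]
The main forcing term contributes $\|\chi_T e^{-it\tau}g\|_{LE^{*,k+3}}\lesssim\sqrt{T}\,\|g\|_{\LE^{*,k+3}}$, so after dividing by $\sqrt{T}$ we recover the first three pieces of the $\LE_\tau^k$ norm of $v$. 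The weighted second-derivative piece $\|(|\tau|+\la r\ra^{-1})^{-1}\nabla^2 v\|_{\LE^k}$ is not produced by the left side of LED at all; I would recover it from the equation by solving
\[
\Delta v=g-\tau^2 v+i\tau P^1 v-(P_{lr}+P_{sr}^2)v
\]
for $\Delta v$, applying elliptic regularity on each dyadic annulus, and using the coefficient hypotheses \eqref{can-p-lr}, \eqref{can-p-sr} to control the lower-order pieces in terms of the first-order bounds just obtained.

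The main obstacle is absorbing the error $\|E\|_{LE^{*,k+3}}$. Each summand in $E$ is supported on a time interval of length $\asymp T$ with coefficients of size $T^{-2}$ or $|\tau|T^{-1}$ applied to $v$, respectively $T^{-1}$ applied to $P^1 v$; a direct bound by the spatial $\LE^*$-norm of $v$ is not admissible because $\|\cdot\|_{\LE^*}$ is strictly stronger than anything that appears on the left. To handle this, I would first dispose of the regime $|\Im\tau|\ge 1$ using the elementary resolvent bound \eqref{energyreshigh} together with the trivial inclusion $\|g\|_{H^k}\le\|g\|_{\LE^{*,k}}$, which gives the conclusion without invoking LED at all. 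In the remaining regime $|\Im\tau|<1$, I would insert a spatial dyadic partition $v=\sum_\ell\phi_\ell v$ subordinate to the annuli $A_\ell$ prior to the LED step: in the corresponding error pieces $v$ is replaced by the compactly supported $\phi_\ell v$, the $\la r\ra^{1/2}$ weight in $\LE^*$ becomes a constant $\sim 2^{\ell/2}$, and this factor is overcome by the combined $T^{-1}$ and $\la r\ra^{-1}$ gains on the left, the pieces being reassembled via the $\ell^1$-structure of $\LE^*$. The higher-regularity cases $k\ge 1$ follow by commuting the spatial derivatives through $P^1,P^2$ and iterating the argument, the single extra derivative loss between \eqref{lewa} and the conclusion being consumed by the commutator with the $l^1S(r^{-1})$ coefficients.
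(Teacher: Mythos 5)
Your strategy---testing \eqref{lewa} against the quasimode $u=\chi_T(t)e^{-it\tau}v$---is genuinely different from the paper's. The paper keeps the full forward solution for all $t>0$, inserts an exponential damping $e^{-\epsilon t}$ (legitimate because the evolution is forward, so the estimate can be restarted at any initial time and averaged), and then applies Plancherel in $t$ to read off the resolvent bound on the line $\Im\tau=-\epsilon$, followed by a density argument to get the pointwise-in-$\tau$ statement. That route produces no cutoff error at all, which is exactly where your argument breaks down.

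The fatal gap is the absorption of $E$. In the regime that matters, $|\Im\tau|<1$, you have $T=1$, so there is no ``$T^{-1}$ gain on the left''; the factor $T^{-1}$ sits on the right, inside the error. Per dyadic annulus $A_\ell$ the error contributes roughly $2^{\ell/2}(1+|\tau|)\|v\|_{L^2(A_\ell)}$ to $\|E\|_{LE^{*}}$, while the left-hand side of \eqref{lewa} controls only $2^{-\ell/2}\|\nabla v\|_{L^2(A_\ell)}$, $|\tau|\,2^{-\ell/2}\|v\|_{L^2(A_\ell)}$ and $2^{-3\ell/2}\|v\|_{L^2(A_\ell)}$. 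The ratio of the error to the best of these is at least $2^{\ell}$ (and $2^{\ell}/|\tau|$ against the $|\tau|\|v\|$ piece), which diverges as $\ell\to\infty$; the $\ell^1$ reassembly over $\ell$ only enlarges the right-hand side further, and localizing $v$ before applying \eqref{lewa} additionally generates commutators $[P^2,\phi_\ell]v$ that must themselves be placed in $LE^{*}$. This mismatch between the $\la r\ra^{-1/2}$, $\sup_m$ structure of $LE$ and the $\la r\ra^{+1/2}$, $\sum_m$ structure of $LE^{*}$ is exactly the obstruction you correctly identified, but the proposed spatial decomposition does not remove it; it is structural to the time-cutoff approach. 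Moreover, any absorption of a $\|v\|$-type error into the left-hand side would require a small constant, which is not available.

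There is a second, independent gap at large real $\tau$. The norm $\|Pu\|_{LE^{*,k+3}}$ counts $k+3$ space-time derivatives of $\chi_T e^{-it\tau}g$, and each time derivative costs a factor $|\tau|$, so your main term is really bounded by $(1+|\tau|)^{k+3}\sqrt{T}\,\|g\|_{\LE^{*,k+3}}$ rather than $\sqrt{T}\,\|g\|_{\LE^{*,k+3}}$. Removing these powers of $\tau$ is a genuine step of the proof: the paper splits $g$ into spatial frequencies below and above $|\tau|$ and reiterates the equation on the low-frequency part, where $P_\tau$ is elliptic, trading powers of $\tau^{-1}$ for derivatives of $g$. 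Your elliptic recovery of the $(|\tau|+\la r\ra^{-1})^{-1}\nabla^2 v$ term matches the paper's treatment of $|\tau|\ll 1$, but the $|\tau|\gg 1$ regime is left unaddressed.
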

While this estimate is for now stated for $\Im \tau < 0$, it is
nevertheless uniform as $\tau$ approaches the real axis.
Later we will use this fact to extend this estimate to all real $\tau$.
 If $- \Im \tau \gtrsim 1$ then this bound is superseded by \eqref{energyreshigh}. Thus only the case $-1 < \Im \tau < 0$ is of interest. 
\begin{proof}
If we consider $f$ supported in a fixed dyadic region $A_l$ and 
measure $u$ in another region $A_m$, from \eqref{lewa}
we obtain the uniform dyadic estimates
\[
 \sum_{i \leq k} \ \ 2^{-\frac{m}2}\|\nabla_{x,t}^{i+1} u \|_{L^2(\R \times A_m)}
+ 2^{-\frac{3m}2} \| \nabla_{x,t}^{i} u \|_{L^2 (\R \times A_m)}
 \lesssim \sum_{i \leq k+3} 2^{\frac{l}{2}} \| \nabla_{x,t}^i f\|_{L^2 (\R \times A_l)}
\]
Since the equation \eqref{inhom} is solved forward in time,
it is straightforward to add an exponentially decreasing weight
in the above estimate. Precisely, from the above estimate one obtains a weighted version
\[
\begin{split}
 \sum_{i \leq k}  2^{-\frac{m}2}\|e^{-\epsilon t} \nabla_{x,t}^{i+1} u \|_{L^2(\R \times A_m)}
\! + \! 2^{-\frac{3m}2} \|e^{-\epsilon t} \nabla_{x,t}^{i} u \|_{L^2 (\R \times A_m)}
\!\! \lesssim \!\! \sum_{i \leq k+3} \!\! 2^{\frac{l}{2}} \| e^{-\epsilon t} \nabla_{x,t}^i f\|_{L^2 (\R \times A_l)}
\end{split}
\]
which holds uniformly with respect to $\epsilon > 0$. If we 
further restrict $\epsilon$ to $0 < \epsilon < 1$ then we can uniformly 
move the exponential inside the differentiation to obtain 
\[
 \sum_{i \leq k}  2^{-\frac{m}2}\| \nabla_{x,t}^{i+1}e^{-\epsilon t} u \|_{L^2(\R \times A_m)}
+ 2^{-\frac{3m}2} \| \nabla_{x,t}^{i} e^{-\epsilon t} u \|_{L^2 (\R \times A_m)}
\!\!\lesssim \!\!\!\sum_{i \leq k+3} 2^{\frac{l}{2}} \|  \nabla_{x,t}^i e^{-\epsilon t} f\|_{L^2 (\R \times A_l)}
\]
Since we only have Hilbert space norms in the above estimate, we can 
take a Fourier transform and use Plancherel's theorem to obtain
\[
\begin{split}
2^{-\frac{m}2} \sum_{i \leq k} (1+|\tau|)^{k-i}\left(   \| \nabla_{x}^{i+1} \hat u(\tau-i\epsilon) \|_{L^2(\R \times A_m)}
+ (2^{-m}+ |\tau|) \| \nabla_{x}^{i} \hat u (\tau -i\epsilon) \|_{L^2 (\R \times A_m)} \right)
\\ \lesssim \sum_{i \leq k+3} 2^{\frac{l}{2}} (1+|\tau|)^{k-i}
\|  \nabla_{x}^i  \hat f(\tau -i\epsilon)\|_{L^2 (\R \times A_l)}
\end{split}
\]
where the $L^2$ norms are now taken with respect to $\tau \in \R$
and $x \in \R^3$. 

We recall that by \eqref{cpr} we have 
$\hat u (\tau -i\epsilon) = R_\tau \hat f(\tau -i\epsilon)$.
Observing that the above bound holds for all $f$ in a dense 
subset of the Hilbert space defined by the norm on the right,
it follows that in effect its pointwise version with respect to $\tau$
must also be valid, namely 
\[
\begin{split}
2^{-\frac{m}2} \sum_{i \leq k} (1+|\tau|)^{k-i}\left(   \| \nabla_{x,t}^{i+1}  v \|_{L^2( A_m)}
+ (2^{-m}+|\Re \tau|) \| \nabla_{x,t}^{i}  v \|_{L^2 ( A_m)} \right)
\\ \lesssim \sum_{i \leq k+3} 2^{\frac{l}{2}} (1+|\tau|)^{k-i}
\|  \nabla_{x,t}^i   g\|_{L^2 ( A_l)}
\end{split}
\]
Here we have redenoted $\tau-i\epsilon$ by $\tau$, which now 
satisfies $\Im \tau < 0$, and replaced $\hat f(\tau)$ by $g$ and
$\hat u(\tau)$ by $v = R_\tau g$. We remark that in view of \eqref{energyreshigh} we can replace the factor $|\Re \tau|$ by 
$|\tau|$ in the second term on the left.

Taking the suppremum over all $m \geq 0$, and then summing over 
all $l \geq 0$, we arrive at
\begin{equation}
\begin{split}
 \sum_{i \leq k} (1+|\tau|)^{k-i} \left(\| \nabla_x v \|_{\LE^{i}}
+  \|(\la r \ra^{-1}+|\tau|) v \|_{\LE^i} \right)
 \lesssim \sum_{i \leq k+3}  (1+|\tau|)^{k+3-i}
\| g\|_{\LE^{*,i}}
\end{split}
\label{locft}\end{equation}
This concludes the transition from local energy decay 
to resolvent bounds. We remark that at the formal level
one can go directly from \eqref{lewa} to \eqref{locft}
for real $\tau$ via Plancherel's theorem. However, in order to rigorously 
justify this analysis we are using the intermediate step
of deriving uniform bounds for $\Im \tau < 0$.

It remains to prove that \eqref{locft} implies \eqref{lesmall}.
This depends on the size of $\tau$. If $|\tau| \approx 1$ there
is nothing to do. In the two remaining cases this transition
is carried out via simple elliptic arguments.

(a) The case $|\tau| \ll 1$. Then we need to estimate the second derivative of $v$ for large $r$. We write the equation for $v$ in the form
\[
P^2 v = -\tau^2 v + i \tau P^1 v
\]
This allows us to estimate
\[
\| (\la r \ra^{-1} + |\tau|)^{-1} P_2 v\|_{\LE^k} \lesssim
\tau \| v\|_{\LE^k} + \| \nabla_x v\|_{\LE^k} + \|g\|_{\LE^{*,k}}
\]
where all the right hand side terms are already controlled 
by the right hand side in \eqref{lewa} via \eqref{locft}. 
Since $P^2$ is elliptic for large $r$, the transition from
$P_2 v$ bounds to $\nabla^2 v$ bounds is done in a standard elliptic fashion within each dyadic region $A_m$.

(b) The case $|\tau| \gg 1$. The left hand side of \eqref{locft}
already  contains the $\LE^k$ norm of $v$, we only need to
be able to discard the $\tau$ factors on the right. For this we
observe that the equation for $v$, namely
\[
(\tau^2 + i \tau P^1 + P^2) v = g
\]
is elliptic for low spatial frequencies $|\xi| \ll \tau$.
This suggests we split $g$ into a low frequency and a high frequency part,
\[
g = S_{\leq \tau} (D_x) g + S_{\geq \tau} (D_x) g := g_{low} + g_{high}
\]
For the high frequency part we can estimate 
\[
\sum_{i \leq k+3}  (1+|\tau|)^{k+3-i}
\| g_{high} \|_{\LE^{*,i}} \lesssim \|g\|_{\LE^{*,k+3}}
\]
and use \eqref{locft} directly.

For the low frequency part, on the other hand, we reiterate
the equation, setting 
\[
v_{low} = \tau^{-2} g_{low} + v_1
\]
where $v_1$ solves 
\[
P_\tau v_1 = i \tau^{-1} P^1 g_{low} + \tau^{-2} P^2 g_{low} = g_{low,1}
\]
Thus we have traded powers of $\tau$ for derivatives, which 
is favourable since $g_{low}$ is low frequency. Reiterating 
several times we arrive at
\[
v_{low} = \sum_{j=0}^{k-1} \tau^{-2-j} Q_j g_{low} + w_{low} 
\]
where $Q_j$ are partial differential operators of order $j$ 
with smooth bounded coefficients and $w_{low}$ solves
\[
P_\tau w_{low} = \tau^{-k} Q_k g_{low} + \tau^{-k-1} Q_{k+1} g_{low} := h_{low}
\]
The first term in $v_{low}$ is estimated directly in terms of $g_{low}$
while for $w_{low}$ we remark that 
\[
\sum_{i \leq k+3}  (1+|\tau|)^{k+3-i}
\| h_{low} \|_{\LE^{*,i}} \lesssim \|g\|_{\LE^{*,k+3}}
\]
and use \eqref{locft}. The proof of the proposition is complete.

\end{proof}

\section{Vector field bounds and the resolvent limit on the real axis}

Our first goal in this section is to prove further estimates for the operator $P_\tau$ by  commuting it with several vector fields:

i) The rotations $\Omega=\{
x_i \partial_j-x_j \partial_i\}$. Commuting them with $P_\tau$ we
obtain 
\[
 [P_\tau,\Omega]= Q_{sr}
\]
where $Q_{sr}$ stands for a short range operator of the form
\[
 Q_{sr} = \tau (h^{0i} \partial_i + \partial_i h^{0i}) + \partial_i h^{ij}
\partial_j + h, \qquad h^{0i}, h^{ij} \in l^1S(r^{-1} ), \qquad h \in l^1 S(r^{-3}) 
\]

ii) The scaling $S = -\tau \partial_\tau + r \partial_r$. Then the commutator is 
\[
 [P_\tau,S]= 2 P_\tau + Q_{lr}+ Q_{sr}
\]
where the long range component is radial and has the form 
\[
 Q_{lr} = k^\omega \Delta_{\omega} + k, \qquad k^\omega, k \in S_{rad}(r^{-3})
\]

The next proposition applies for spatial functions $g$, in which case 
$S = S_r g$. However, it is also interesting to allow $g$ to depend
on the parameter $\tau$.

\begin{proposition}
Let $\Im \tau < 0$ and $g \in \LE^*$, possibly depending 
on $\tau$, so that 
\begin{equation}
\| T^i \Omega^j S^k g\|_{\LE^*} \leq 1, \qquad i+4j+16k \leq N
\label{oslarge}\end{equation}
Then
\begin{equation}
\| T^i \Omega^j S^k R_\tau g\|_{\LE_\tau} \lesssim 1,
\qquad i+4j+16k \leq N-4
\label{ossmall}\end{equation}
\label{putau}\end{proposition}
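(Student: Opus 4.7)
The proof is by strong induction on the weighted index $|I| := i + 4j + 16k$. The base case $j = k = 0$ is immediate from Proposition~\ref{l:lesmall}: since $T = \{\partial_1,\partial_2,\partial_3\}$, applying $T^i$ and using the built-in spatial derivatives in $\LE_\tau^i$ yields
\[
\|T^i R_\tau g\|_{\LE_\tau} \leq \|R_\tau g\|_{\LE_\tau^i} \lesssim \|g\|_{\LE^{*,i+4}},
\]
which is controlled by \eqref{oslarge} whenever $i+4 \leq N$, i.e., whenever $|I| \leq N-4$.

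For the inductive step, set $\Phi = T^i \Omega^j S^k$ and use the fundamental identity
\[
P_\tau(\Phi v) = \Phi g + [P_\tau, \Phi] v, \qquad \text{so} \qquad \Phi v = R_\tau\bigl(\Phi g + [P_\tau, \Phi] v\bigr),
\]
combined with Proposition~\ref{l:lesmall}:
\[
\|\Phi v\|_{\LE_\tau} \lesssim \|\Phi g\|_{\LE^{*,4}} + \|[P_\tau, \Phi] v\|_{\LE^{*,4}}.
\]
The first summand is bounded by \eqref{oslarge} directly (the extra four translations being absorbed into the slack in $|I| \leq N-4$). For the second, expand $[P_\tau, \Phi]$ via the Leibniz rule using the stated commutator formulas: $[P_\tau, T]$ has the same form as $P_\tau$ with one extra derivative on each coefficient (improving decay), $[P_\tau, \Omega] = Q_{sr}$, and $[P_\tau, S] = 2P_\tau + Q_{lr} + Q_{sr}$. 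Whenever a $P_\tau$ factor appears (from the principal term of $[P_\tau, S]$), push it rightward through the remaining vector fields by further commutation until it hits $v$, producing a $g$-contribution plus strictly lower-weight remainders.

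After full expansion, $[P_\tau, \Phi] v$ becomes a finite sum of terms $A \cdot \Phi' v$ with $\Phi' = T^{i'} \Omega^{j'} S^{k'}$ of strictly smaller weight $|I'| < |I|$, and $A$ composed from $Q_{sr}$, $Q_{lr}$, and repositioned vector fields (the symbol classes $S_{rad}(r^{-m})$ and $l^1 S(r^{-m})$ are preserved by $\Omega$ and $S$). The basic dyadic estimates one needs are
\[
\|Q_{sr} w\|_{\LE^{*,m}} \lesssim \|w\|_{\LE_\tau^{m+2}}, \qquad \|Q_{lr} w\|_{\LE^{*,m}} \lesssim \|w\|_{\LE_\tau^m} + \sum_{|\alpha| \leq 2}\|\Omega^\alpha w\|_{\LE_\tau^m},
\]
applied here with $m = 4$. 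For $Q_{sr}$, the gain converting sup-in-dyadic $\LE$ into sum-in-dyadic $\LE^*$ is bought by the $l^1$-summability of the $S(r^{-1})$ coefficients. For $Q_{lr}$, writing $r^2 \Delta_\omega = \sum_i \Omega_i^2$ and exploiting the $r^{-3}$ decay of the radial coefficients leaves an abundant surplus. Applying the induction hypothesis to each $\Phi' v$ closes the argument.

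The main obstacle is the combinatorial bookkeeping of weights under iterated commutation. The factors $4$ and $16$ in the budget are chosen with slack: each rotation contributes a $Q_{sr}$-loss of $2$ derivatives on $v$ on top of the $4$-derivative loss from Proposition~\ref{l:lesmall}, while each scaling compounds this cost and additionally requires spending rotation-budget to absorb $Q_{lr}$ via angular second derivatives. One must also verify that the iterated commutators $[X_1, Q_{sr}]$ and $[X_1, Q_{lr}]$ for $X_1 \in \{T, \Omega, S\}$ remain in the stated symbol classes, which follows from invariance of the $S_{rad}$ and $l^1 S$ scales under tangential differentiation and under the radial scaling $r\partial_r$.
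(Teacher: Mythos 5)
Your proof follows essentially the same route as the paper: commuting $P_\tau$ with $T$, $\Omega$, $S$, using $[P_\tau,\Omega]=Q_{sr}$ and $[P_\tau,S]=2P_\tau+Q_{lr}+Q_{sr}$ together with the mapping bounds for $Q_{sr}$ and $Q_{lr}$ from $\LE_\tau$-type norms into $\LE^*$-type norms, and feeding the commutator errors back through Proposition~\ref{l:lesmall}. The paper works out the cases $\Omega$, $\Omega^2$ and $S$ explicitly and leaves the general case as an induction for the reader, which is precisely the induction on the weight $i+4j+16k$ that you carry out.
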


\begin{proof}
Set $v = R_\tau g$. To illustrate the method we first consider the simplest cases.
 In the case of $\Omega$ we have
\[
 P_\tau \Omega v = \Omega g + Q_{sr} v
\]
A direct computation shows that
\begin{equation}
 \|Q_{sr} v\|_{\LE^{*,k}} \lesssim   \| v\|_{\LE_\tau^{k}}
\label{qsr}\end{equation}
Hence by \eqref{lesmall} we obtain
\[
\| \Omega v\|_{\LE_\tau^k} \lesssim \|\Omega g \|_{\LE^{*,k+4}}
+ \|g\|_{\LE^{*,k+8}}
\]

Consider now the operator $\Omega^2$. We have
\[
 P_\tau \Omega^2 v = \Omega^2 f + Q_{sr} \Omega v + Q_{sr} v
\]
so we conclude again via \eqref{qsr} and \eqref{lesmall} that
\[
 \| \Omega^2 v\|_{\LE_\tau^k} \lesssim 
\|\Omega^2 g \|_{\LE^{*,k+4}}+
\|\Omega g \|_{\LE^{*,k+8}}
+ \|g\|_{\LE^{*,k+12}}
\]

Next consider the scaling field $S$.
We have 
\[
 P_\tau S v = S g + 2 g + Q_{sr} v + Q_{lr} v
\]
For $Q_{sr}$ we use \eqref{qsr}, while for $Q_{lr}$ we use
\begin{equation}
 \|Q_{lr} v\|_{\LE^{*,k}} \lesssim   \| v\|_{\LE_\tau^k} + \| \Omega^2 v\|_{\LE_\tau^k} 
\label{qlr}\end{equation}
to obtain
\[
 \| Sv\|_{\LE_\tau^k} \lesssim \|S g \|_{\LE^{*,k+4}}+ 
\|\Omega^2 g \|_{\LE^{*,k+8}}+
\|\Omega g \|_{\LE^{*,k+12}}
+ \|g\|_{\LE^{*,k+16}}
\]
The proof of the general case is by induction. The details are left
for the reader.
\end{proof}

Now we have enough information in order to define 
the  pointwise limit of the resolvent as $\tau$ 
approaches the real line. The next result deals with the limit 
on $\R \setminus\{ 0\}$. The detailed analysis near frequency zero 
is left for the next section.

\begin{proposition}
i) The operators $R_\tau$ extend continuously from the lower half-space
to the real line $\R\setminus\{ 0\}$ in the $H^4_{comp} \to L^2_{loc}$ topology, and the bound \eqref{lesmall} holds uniformly for all real $\tau$. 

ii) Let $\tau \in \R \setminus \{0\}$ and $g \in \LE^{*,4}$. Then the function 
$v = R_\tau g$ satisfies the outgoing radiation condition
\begin{equation}
\lim_{j \to \infty} 2^{-\frac{j}2} \| (\partial_r +i\tau) v\|_{L^2(A_j)} = 0
\label{outrad}\end{equation}

iii) Conversely, suppose that $\tau \in \R \setminus 0$, and $v \in \LE_\tau^4$ which satisfies the outgoing radiation condition \eqref{outrad}. If
$P_\tau v = g \in \LE^{*,4}$ then $v=R_\tau g$.

\label{prext}\end{proposition}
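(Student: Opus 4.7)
Let $v_\epsilon := R_{\tau-i\epsilon}g$ for $\epsilon>0$. By Proposition~\ref{l:lesmall} these are uniformly bounded in $\LE_\tau$. To extract a limit as $\epsilon\to 0^+$, the plan is to apply Proposition~\ref{putau} to the $\tau$-independent source $g$, so that $Sg = r\partial_r g$; this yields a uniform $\LE_\tau$ bound on $Sv_\epsilon = (-\tau\partial_\tau + r\partial_r)v_\epsilon$. On a fixed spatial compact set, both $Sv_\epsilon$ (directly) and $r\partial_r v_\epsilon$ (since $r$ is bounded and $\nabla v_\epsilon\in \LE$) lie uniformly in $L^2$, so $\tau\,\partial_\tau v_\epsilon$ is uniformly in $L^2_{loc}$. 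Restricting $\tau$ to a compact $K\Subset \R\setminus\{0\}$ removes the $\tau$ factor, giving a uniform $L^2_{loc}$ bound on $\partial_\tau v_\epsilon$. The Cauchy--Riemann relations for the holomorphic map $z\mapsto R_z g$ on $\{\Im z<0\}$ yield the same bound for $\partial_\epsilon v_\epsilon$; integrating in $\epsilon$ shows $v_\epsilon$ is Cauchy in $L^2_{loc}$, producing a continuous limit $R_\tau g$ on $\R\setminus\{0\}$, and \eqref{lesmall} passes to the limit by lower semicontinuity.

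\textbf{Part (ii).} For $\epsilon>0$ and $g$ compactly supported (dense in $\LE^{*,4}$), $v_\epsilon$ decays exponentially in $r$, so \eqref{outrad} holds trivially. To propagate this to $\epsilon=0$, the plan is to establish the $\epsilon$-uniform bound
\[
\|(\partial_r + i(\tau-i\epsilon))\, v_\epsilon\|_{L^2(\R^3)} \lesssim \|g\|_{\LE^{*,4}}
\]
via a Morawetz-type multiplier identity: pair $P_\tau v_\epsilon = g$ with $\overline{(\partial_r+i\tau)v_\epsilon}$ against a suitable radial weight and integrate by parts. The Minkowski principal part $\tau^2+\Delta_x$ contributes a coercive bulk term on $\|(\partial_r+i\tau)v_\epsilon\|_{L^2}^2$; the perturbations $P_{lr}$, $P^2_{sr}$, $-i\tau P^1$ and $V_{lr}+V_{sr}$ are absorbed into $\|v_\epsilon\|_{\LE_\tau^1}$ (already controlled by \eqref{lesmall}) using the decay of their coefficients and Cauchy--Schwarz, and the source is handled by the $\LE^*$/$\LE$ duality. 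Passing to the limit $\epsilon\to 0$ via part~(i) gives $(\partial_r+i\tau)v \in L^2(\R^3)$, whence $\sum_j \|(\partial_r+i\tau)v\|_{L^2(A_j)}^2 <\infty$, which in particular implies \eqref{outrad}.

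\textbf{Part (iii).} Given $v$ as hypothesized, set $w := v - R_\tau g$; then $P_\tau w = 0$ and, by part~(ii), $w$ is outgoing. Apply the Morawetz identity from part~(ii) to $w$ with vanishing right-hand side: the coercive bulk quantity is then bounded solely by boundary fluxes at $r\to\infty$, and \eqref{outrad} together with the $L^2$ outgoing bound forces these fluxes to vanish. This shows $(\partial_r + i\tau)w = 0$ in $L^2$ at infinity, so $w$ has no radiating leading-order component. Decomposing into spherical harmonics in the region where the long-range part of $P_\tau$ is radial, each mode satisfies an outgoing Hankel-type ODE; combined with the regularity structure (regularity at $r=0$ in Case~A, or the outgoing spacelike character of $\partial B(0,R_0)$ in Case~B, which precludes sourcing $w$ from the lateral boundary in the backward direction), each mode must vanish. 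A Carleman/unique continuation argument for $P_\tau$ then propagates $w\equiv 0$ to the whole domain.

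\textbf{Main obstacle.} The crux is setting up the Morawetz identity of part~(ii) so that the principal Minkowski contribution is coercive on $(\partial_r+i\tau)v_\epsilon$ while every perturbation of $P_\tau$ is absorbed into the $\LE_\tau^1$ norm already controlled by Proposition~\ref{l:lesmall}. In Case~B, a boundary term appears at $\partial B(0,R_0)$, and condition~(iv) (the outgoing space-like character of the lateral boundary) is what ensures this term has the right sign; a similar sign consideration underlies the uniqueness argument of part~(iii).
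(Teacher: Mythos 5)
Part (i) of your proposal is essentially the paper's argument (commute with the scaling field via Proposition~\ref{putau} on a dense subset, bound $\tau\partial_\tau R_\tau g$ in $H^1_{loc}$, conclude Lipschitz continuity of $\tau\mapsto R_\tau g$ up to the real axis), and is fine modulo stating explicitly that you first restrict to $g$ satisfying \eqref{oslarge} and then use the uniform bound \eqref{lesmall} to pass to general data.

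In part (ii) there is a genuine error: the claimed uniform bound $\|(\partial_r+i(\tau-i\epsilon))v_\epsilon\|_{L^2(\R^3)}\lesssim\|g\|_{\LE^{*,4}}$ is false, already for the flat Helmholtz equation. Take $g$ radial with $g\approx r^{-2}(\log r)^{-2}$, which belongs to $\LE^{*,4}$; computing with the outgoing kernel $|y|^{-1}e^{-i\tau|y|}$ one finds that $(\partial_r+i\tau)v$ contains a non-oscillating contribution of size $r^{-1}(\log r)^{-1}$ coming from the mass of $g$ outside radius $r$, which is not in $L^2(\R^3)$ (while \eqref{outrad} still holds). Since you prove your bound on a dense set with a constant depending only on $\|g\|_{\LE^{*,4}}$, it would extend to all of $\LE^{*,4}$ by density --- a contradiction. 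Any correct quantitative statement must carry a stronger weighted norm of $g$ on the right; this is exactly why the paper works on the dense class $\la r\ra g\in\LE^*$ and proves $\|\la r\ra(\partial_r+i\tau)w\|_{\LE}\lesssim\|\la r\ra h\|_{\LE^*}$. Relatedly, a multiplier identity whose principal bulk term is coercive on the \emph{unweighted} $L^2$ norm of $(\partial_r+i\tau)v$ forces a radial weight growing like $r$ (the $p=1$ member of the $r^p$ hierarchy); with that weight the source pairing demands precisely the extra power of $r$ on $g$, the long-range terms enter at critical scaling, and in Case B an inner boundary term appears. The paper sidesteps all of this by truncating to $r>R$, treating $P_{lr}+P_{sr}$ as a right-hand side, and reading the $2^{-m}$ gain for $(\partial_r+i\tau)(\Delta+\tau^2)^{-1}$ directly off the explicit kernel. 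Your route can likely be repaired along $r^p$ lines, but not as written.

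Part (iii) has the more serious gap. Your plan is Rellich uniqueness at infinity followed by unique continuation inward. First, the spherical-harmonic reduction to "outgoing Hankel-type ODEs" only applies to the radial long-range part; $P_{sr}$ does not commute with rotations, so the modes do not decouple and one needs a perturbed Rellich theorem. Second, and decisively, unique continuation from a neighborhood of infinity into the compact region is not available: the paper explicitly notes that $P^2$ is elliptic for large $r$ "but not necessarily for small $r$", and in Case B the Killing field fails to be timelike near the horizon, so no off-the-shelf Carleman estimate for $P_\tau$ applies there; establishing one would amount to an independent (and hard) mode-stability statement. The paper's proof avoids uniqueness theorems entirely: it sets $v_\epsilon=ve^{-\epsilon\la r\ra}$, uses $v_\epsilon=R_{\tau-i\epsilon}P_{\tau-i\epsilon}v_\epsilon$ (valid since $v_\epsilon$ decays), and shows $P_{\tau-i\epsilon}v_\epsilon\to g$ in $\LE^{*,4}$, the radiation condition \eqref{outrad} (suitably upgraded to derivatives) being used exactly once, to kill the error term $2\epsilon(\partial_r+i\tau+r^{-1})v\,e^{-\epsilon\la r\ra}$ in $\LE^*$. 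You should adopt that mechanism; the Morawetz--Rellich--Carleman chain you propose does not close on these backgrounds.
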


This result allows us to transfer the previous bounds 
in this section from the lower half-plane to the real axis.  
\begin{corollary}
The results in Propositions~\ref{l:lesmall},\ref{putau} and   Proposition~\ref{pusmall} apply for all nonzero real $\tau$.
In addition, for $i,j,k$ as in Proposition~\ref{putau}, the functions
$T^i \Omega^j S^k R_\tau g$ satisfy the outgoing radiation condition
\eqref{outrad}. 
\end{corollary}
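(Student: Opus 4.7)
The corollary rests on the three parts of Proposition~\ref{prext}: part (i) supplies continuity of $R_\tau$ up to the real line together with a uniform version of \eqref{lesmall}; part (ii) gives the outgoing radiation condition \eqref{outrad} for any $R_\tau g$ with $g \in \LE^{*,4}$; and part (iii) produces the uniqueness statement that identifies an element of $\LE_\tau^4$ satisfying $P_\tau v = g$ and \eqref{outrad} with $R_\tau g$. The overall strategy is an approximation from the lower half-plane combined with an induction on the number of commuted vector fields; the subtlety is that the radiation condition must be propagated along with the quantitative bound at each step.

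The extension of Proposition~\ref{l:lesmall} to real $\tau \neq 0$ is already the content of part (i) of Proposition~\ref{prext}. For Proposition~\ref{putau} I argue by induction on $i + 4j + 16k$, simultaneously establishing the bound \eqref{ossmall} and the outgoing radiation condition for each image $X v$ with $v = R_\tau g$ and $X$ of the form $T^i \Omega^j S^k$. Fix real $\tau \neq 0$ and approximate it by $\tau_n = \tau - i\epsilon_n$ with $\epsilon_n \downarrow 0$. In the lower half-plane the proof of Proposition~\ref{putau} supplies the commuted resolvent identities, for example
\[
 \Omega R_{\tau_n} g = R_{\tau_n}(\Omega g + Q_{sr} R_{\tau_n} g),
\]
with analogous identities for the scaling and translation fields. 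The $L^2_{loc}$ continuity of $R_\tau$ from part (i) lets us pass to $n \to \infty$, yielding
\[
 X v = R_\tau (X g + E_X v),
\]
where $E_X$ is a differential expression of strictly lower vector-field weight than $X$, assembled from the commutators with $P_\tau$. By \eqref{qsr}, \eqref{qlr} and the inductive hypothesis, $X g + E_X v \in \LE^{*,4}$, so part (ii) of Proposition~\ref{prext} supplies \eqref{outrad} for $X v$, while the extended \eqref{lesmall} applied to $X g + E_X v$ yields \eqref{ossmall} by the same book-keeping as in the proof of Proposition~\ref{putau}.

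Proposition~\ref{pusmall} extends to real $\tau$ by the same limit procedure, since all quantities appearing in its second order expansion near $\tau = 0$ are continuous in $\tau$ in the topologies furnished by Proposition~\ref{prext}(i). The main point of delicacy is the circular interplay between the identification $X v = R_\tau(\cdots)$, which is needed in order to invoke Proposition~\ref{prext}(ii) and the extended \eqref{lesmall}, and the radiation condition on $X v$, which would be needed in order to apply Proposition~\ref{prext}(iii) directly at real $\tau$. The approximation through the lower half-plane breaks the loop: there the commuted identity holds for free because $P_\tau$ is invertible, and continuity of $R_\tau$ then transports both the identity and the radiation condition simultaneously to the real axis.
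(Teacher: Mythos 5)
Your argument is correct and follows the route the paper intends: the corollary is stated there without an explicit proof, as an immediate consequence of Proposition~\ref{prext}, and your approximation from the lower half-plane — passing the commuted identities $Xv = R_\tau(Xg + E_X v)$ to the limit and then invoking parts (i) and (ii) of Proposition~\ref{prext} to recover both the bounds and the radiation condition — is exactly the intended transfer mechanism. The index bookkeeping and the observation that the lower half-plane breaks the apparent circularity between the identity and the radiation condition are both sound.
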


\begin{proof}
i) In view of the uniform bound \eqref{lesmall}
it suffices to establish the convergence on a dense subset.
Let $g \in \LE^*$ so that \eqref{oslarge} holds for some 
large $N$. Then 
by \eqref{ossmall} we have 
\[
 \| S R_\tau g\|_{\LE_\tau} \lesssim 1
\]
Hence we can write
\[
 \tau \partial_\tau R_\tau g = S R_\tau g + r \partial_r R_\tau
\]
In view of the $\LE_\tau$ bounds for $R_\tau g$ and $S R_\tau g$
we can bound the LHS locally in $L^2$ and in $H^1$.
Hence the map $\tau \to R_\tau g$ is locally Lipschitz away from 
$\tau =0$ from $\{ \Im \tau \leq 0\}$ into $H^1_{loc}$.  The desired 
convergence follows.

ii)  By \eqref{lesmall} it  suffices to establish the radiation condition \eqref{outrad} for $g$ in a dense subset of $\LE^{*,4}$.
Here  we consider functions $g \in \LE^*$ so that \eqref{oslarge} holds 
for $g$ for some large $N$, and so that in addition $g$ decays 
one order faster at infinity, $\la r \ra g \in \LE^{*}$.
We denote $v(\tau) = R_\tau g$. 

For some large $R$  we  truncate $v(\tau)$ to the 
exterior of a ball of size $R$ and set 
\[
w(\tau) = \chi_{> R} v(\tau).
\]
This does not affect its behavior at spatial infinity 
and thus the radiation condition.
The functions $w(\tau)$ solve an equation of the form
\begin{equation}
 (\Delta + \tau^2) w = Q_{lr} w + Q_{sr} w  
+ [ P_\tau,\chi_{> R}] v(\tau)+
\chi_{> R} g:= h(\tau)
\label{flatpert}\end{equation}
We can now use the local energy decay estimate for $\Delta + \tau^2$,
namely
\begin{equation}
 \| \nabla w \|_{\LE} 
+ \|(|\tau|+\la r\ra^{-1}) w\|_{\LE}  \lesssim \|h\|_{LE^*}
\label{lebox}\end{equation}
which can be either proved directly or derived from 
\eqref{le} for $\Box$ in the same way \eqref{lesmall} was obtained
from \eqref{lew}.

In view of \eqref{ossmall} and of the condition $\la r \ra g \in \LE^{*}$
we can estimate uniformly 
for $\tau$ in a compact subset of $\{ \Im \tau \leq 0\} \setminus\{ 0\}$ 
\[
\| \la r \ra h(\tau)\|_{\LE^*} \lesssim 1.
\]
The radiation condition follows if we can show that the solutions 
$w(\tau)$ to \eqref{flatpert} satisfy 
\begin{equation}
 \| \la r \ra (\partial_r + i\tau) w \|_{\LE} \lesssim
 \|\la r \ra h\|_{\LE^*} 
\label{radcheck} \end{equation}
uniformly on compact sets. 
Indeed, let us estimate $w$ in a dyadic region  $A_m$.
We split $h = \chi_{< m-2} h + \chi_{\geq m-2} h$.
We have 
\[
 \|\chi_{\geq m-2} h\|_{\LE^*} \lesssim 2^{-m} \|\la r \ra h\|_{\LE^*}
\]
therefore by \eqref{lebox} we have
\[
 \| \chi_m  (\partial_r + i\tau) (\Delta + \tau^2)^{-1}
\chi_{\geq m-2} h \|_{\LE} \lesssim 2^{-m}  \|\la r \ra h\|_{\LE^*}
\]

It remains to prove the same bound for the other component $\chi_{< m-2} h$. For this use the kernel for the 
operator $(\Delta + \tau^2)^{-1}$ which is 
\[
K_\tau(y) = |y|^{-1} e^{-i\tau |y|}
\]
This is justified for $\Im \tau < 0$, where the 
symbol of $\Delta + \tau^2$ is nonzero, and in effect
bounded from below. A direct computation gives
\[
 (\partial_r + i\tau) (\Delta + \tau^2)^{-1} h(x) 
= e^{-i\tau |x|} \int h(y) |x-y|^{-1} e^{-i\tau |x-y|} a(x,y) dy
\]
where 
\[
 a(x,y) = -\frac{x \cdot (x-y)}{|x||x-y|^2} 
- i \tau \left( \frac{x \cdot (x-y)}{|x||x-y|} -1 \right)
\]
We are interested in the case where we have the localization
$|x| \approx 2^{m}$ and $2|y| \leq |x|$. There we have 
$|a| \lesssim 2^{-m} \la y \ra $. Taking absolute values 
inside the integral, we use again \eqref{lebox} with $\tau = 0$
to obtain
\[
 \| \chi_m  (\partial_r + i\tau) (\Delta + \tau^2)^{-1}
\chi_{< m-2} h \|_{\LE} \lesssim 2^{-m}.  
\]
The proof of \eqref{radcheck} is concluded. 

iii) If $v$ is compactly supported then for $\epsilon > 0$ we have
\[
v = R_{\tau - i\epsilon} P_{\tau - i \epsilon} v 
\]
But the right hand side converges to $R_\tau g$ locally in $L^2$,
and the proof is concluded.

Suppose now that $v$ is supported away from $0$. 
We first use the radiation condition for $v$ to derive 
a similar radiation condition for its derivatives,
\begin{equation}
\lim_{j \to \infty} 2^{-\frac{j}2} \| \nabla^k (\partial_r +i\tau) v\|_{L^2(A_j)} = 0, \qquad k \leq 4
\label{outdf}\end{equation}
We recall that  $P_\tau$ has the form 
\begin{equation}
P_\tau = \Delta+\tau^2+ P_{lr} + P_{sr},
\label{ptauexp}\end{equation}
therefore by commuting and estimating all terms with decaying coefficients
in terms of $v$ we obtain
\[
\| \la r \ra^{-1} (\Delta+ \tau^2) (\partial_r + i \tau) v\|_{\LE^{*,4}}
\lesssim \| v\|_{\LE_\tau^4} + \|g\|_{\LE^{*,4}}.
\]
Hence from the radiation condition \eqref{outrad} it follows that
\[
\lim_{j \to \infty} 2^{-\frac{j}2} \| \Delta (\partial_r +i\tau) v\|_{L^2(A_j)} = 0.
\]
Combined with \eqref{outrad} and elliptic estimates for $\Delta$ we obtain a similar decay property for $\nabla (\partial_r +i\tau)$ and 
$\nabla^2 (\partial_r +i\tau)$. Reiterating we arrive at \eqref{outdf}.

For $\epsilon > 0$ we define the functions $v_\epsilon = v e^{-\epsilon \la r\ra}
$
which satisfy
$v_\epsilon = R_{\tau-i\epsilon} P_{\tau-i\epsilon} v_\epsilon$.
By \eqref{lesmall} and part (i) of the proposition, the
 desired conclusion would follow if we are able 
to show that
\begin{equation}
P_{\tau-i\epsilon} v_\epsilon \to g \qquad \text{ in } \LE^{*,4}
\label{kyt}\end{equation}
For this we use \eqref{ptauexp} to compute directly
\[
P_{\tau-i\epsilon} v_\epsilon = g e^{-\epsilon \la r\ra}
+ 2 \epsilon (\partial_r + i \tau + \frac{1}r) v e^{-\epsilon \la r\ra} 
+ (\epsilon \ l^1S(r^{-2}) + \epsilon^2 l^1S(r^{-1})+ \epsilon\ l^1 S(r^{-1})  \nabla)
 v e^{-\epsilon \la r\ra} 
\]
Since $g \in \LE^{*,4}$, the first term converges to $g$ in 
$\LE^{*,4}$. For the $(\partial_r + i\tau) v$ part of the second term,
the same follows from the expanded radiation condition \eqref{outdf}.
Finally for the remaining terms we obtain  $O(\epsilon)$ decay in $\LE^{*,4}$
from the $\LE^4_\tau$ bound for $v$. Thus \eqref{kyt} follows and the 
proof is concluded.
\end{proof}

\section{ The resolvent analysis near zero frequency}

In this section we first study the limit of the resolvent at frequency 
$\tau=0$. Then we consider data with more regularity and decay
at spatial infinity, and we produce an expansion of the zero resolvent
in terms of powers of $r$. Finally, we obtain a quadratic expansion of the resolvent near frequency $0$ for data with faster decay.
We begin by establishing the existence of the limit of the resolvent at zero.

\begin{proposition} [Resolvent extension to $\tau=0$] \label{pu00}
a) Let $g, \la r\ra g \in \LE^{*,4}$. Then the limit
\[
 R_0 g = \lim_{\epsilon \to 0} R_{-i\epsilon} g
\]
exists in the $L^2_{loc}$ topology, and the following bound holds:
\begin{equation}
 \| \la r\ra R_0 g\|_{\LE_0^k} \lesssim  \|\la r \ra  g\|_{\LE^{*,4+k}},
\qquad k \geq 0
\label{r0a}\end{equation}
b) The operator $R_0$ admits a unique continuous extension to $\LE^{*,4}$
which satisfies
\begin{equation}
 \|  R_0 g\|_{\LE_0^k} \lesssim  \|g\|_{\LE^{*,4+k}}, \qquad 
k \geq 0
\label{r0b}\end{equation}
respectively
\begin{equation}
\lim_{j \to \infty} 2^{(m-\frac{3}2)j} \| \nabla^m R_0 g\|_{L^2(A_j)} = 0,
\qquad j = 0,1,2
\label{zerorad} \end{equation}

c) Conversely, let $v \in \LE_0^{4}$ so that $P_0 v = g \in \LE^{*,4}$
and 
\begin{equation}
\lim_{j \to \infty} 2^{-\frac{j}2} \| \partial_r v\|_{L^2(A_j)} = 0.
\label{zeroradsuf} \end{equation}
Then $v = R_0 g$.
\end{proposition}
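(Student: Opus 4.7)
The plan is to pass the resolvent bounds of Propositions~\ref{l:lesmall} and~\ref{putau} to the limit $\tau \to 0$, using the extra $\la r\ra$ decay on $g$. The three parts are proved in the order (a), (b), (c), except that the identification of the limit in (a) relies on the uniqueness of (c); accordingly, I would first establish uniform a priori bounds and subsequential convergence in (a), then prove (c) as a freestanding uniqueness result, and finally close (a).

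The key input for (a) is the uniform weighted estimate
\[
 \|\la r\ra R_\tau g\|_{\LE_\tau^k} \lesssim \|\la r\ra g\|_{\LE^{*,k+4}}, \qquad \Im\tau < 0.
\]
Setting $v = R_\tau g$ and $\tilde v = \la r\ra v$, one has $P_\tau \tilde v = \la r\ra g + [P_\tau,\la r\ra] v$, where the commutator is first order with principal part $2\partial_r$ plus bounded lower-order terms. For this to match the $\LE^{*,k+4}$ right hand side required by Proposition~\ref{l:lesmall}, the key observation is that the radial derivative from the commutator can be paired against the scaling field $S$ provided by Proposition~\ref{putau}: bounds on $S R_\tau g$ effectively convert $r \partial_r v$ into a controllable $\LE^*$-type object, with the necessary spatial decay furnished by $\la r\ra g \in \LE^{*,k+4}$. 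Once the commutator source is bounded, a second application of Proposition~\ref{l:lesmall} to $\tilde v$ closes the estimate.

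The uniform bound makes $\{R_{-i\epsilon} g\}_{\epsilon > 0}$ relatively compact in $L^2_{\mathrm{loc}}$; any subsequential limit $v$ satisfies $P_0 v = g$ distributionally, inherits the bound, and satisfies the radiation decay \eqref{zeroradsuf}. The latter follows by passing \eqref{outrad} to the limit: the improved control $\|\la r\ra R_\tau g\|_{\LE_\tau^k}$ forces $\tau v \to 0$ locally as $\tau \to 0$, so $i\tau v$ inside $(\partial_r + i\tau) v$ drops out and only $\partial_r v$ survives. Uniqueness from (c) then pins down the limit uniquely, and the full family $R_{-i\epsilon} g$ converges to $R_0 g$.

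Parts (b) and (c) are tackled next. The bound \eqref{r0b} and the continuous extension to $\LE^{*,4}$ follow from density together with the $\tau$-uniform Proposition~\ref{l:lesmall}, passed to the limit. The decay \eqref{zerorad} is obtained from interior elliptic regularity for $P_0$ at spatial infinity (where $P_0$ is a short range perturbation of $\Delta$), applied to the $\LE_0^k$ control from (a). For uniqueness in (c), set $w = v - R_0 g$, so that $P_0 w = 0$ with $w \in \LE_0^4$ and $\|\partial_r w\|_{L^2(A_j)} = o(2^{j/2})$. I would close via a zero-frequency Rellich argument: multiplying $P_0 w = 0$ by $r\partial_r \bar w$ and integrating over large balls, the decay hypothesis kills the boundary terms, while the elliptic structure of $P_0$ at infinity and the zero-frequency local energy bound in the compact region together force $w \equiv 0$. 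The main obstacle is the trapping: ellipticity of $P_0$ degenerates at the trapped set, so the Rellich identity at infinity must be supplemented by the zero-frequency version of Proposition~\ref{l:lesmall} (whose cutoff $\chi$ in Definition~\ref{d:weakle} is designed precisely to absorb the trapping) in order to close the argument.
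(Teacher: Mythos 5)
Your overall architecture (uniform bounds, subsequential compactness, uniqueness, then density for (b)) matches the paper's, but two of your key steps have genuine gaps.

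First, the uniform weighted estimate $\|\la r\ra R_\tau g\|_{\LE_\tau^k}\lesssim\|\la r\ra g\|_{\LE^{*,k+4}}$ for $\Im\tau<0$ is not obtained by your commutator argument. Writing $P_\tau(\la r\ra v)=\la r\ra g+[P_\tau,\la r\ra]v$ with $[P_\tau,\la r\ra]v\approx 2\partial_r v+\mathrm{l.o.t.}$, you must place $\partial_r v$ in the \emph{summed} norm $\LE^{*}$, i.e. control $\sum_m 2^{m/2}\|\partial_r v\|_{L^2(A_m)}$. The scaling-field bound of Proposition~\ref{putau} only yields $\|r\partial_r v\|_{\LE}\lesssim 1$, i.e. $\sup_m 2^{m/2}\|\partial_r v\|_{L^2(A_m)}\lesssim 1$; the passage from the dyadic supremum to the dyadic sum diverges logarithmically, so the source term for the second application of Proposition~\ref{l:lesmall} is not in $\LE^*$. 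Moreover, Proposition~\ref{putau} requires the hypothesis \eqref{oslarge}, i.e. $\LE^*$ control of $\Omega^jS^kg$, which is not part of the hypotheses here ($g,\la r\ra g\in\LE^{*,4}$ gives no angular or scaling regularity). The paper sidesteps both problems by proving the weighted bound \emph{only at} $\tau=0$: after truncating to $r>R$, it treats $P_0$ as a perturbation of $-\Delta$ and runs a Picard iteration using the explicit kernel $|x-y|^{-1}$, for which the mapping $\la r\ra\LE^*\to\la r\ra^{-1}\LE_0$ is verified directly, the perturbation being $O(R^{-1})$ small. This is genuinely a zero-frequency fact; for $\tau\neq0$ the logarithmic corrections quantified later in Proposition~\ref{pusmall} show why a clean uniform weighted bound is not to be expected from soft commutation.

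Second, your Rellich-identity proof of uniqueness in (c) is circular where it matters most. Proposition~\ref{l:lesmall} is a bound for functions of the form $R_\tau g$, not an a priori estimate for arbitrary solutions of $P_0w=0$ in $\LE_0^4$; invoking it ``to absorb the trapping'' presupposes that $w$ lies in the range of the resolvent, which is exactly what uniqueness is supposed to establish. The paper's route for (c) is different and avoids this: it first upgrades \eqref{zeroradsuf} to the derivative radiation conditions $\lim_j2^{-j/2}\|\nabla^k\partial_rv\|_{L^2(A_j)}=0$, $k\le4$, then sets $v_\epsilon=ve^{-\epsilon\la r\ra}$, shows $P_{-i\epsilon}v_\epsilon\to g$ in $\LE^{*,4}$ (the dangerous term $2\epsilon(\partial_r+\frac1r)ve^{-\epsilon\la r\ra}$ is handled by the radiation condition), and concludes $v=\lim R_{-i\epsilon}P_{-i\epsilon}v_\epsilon=R_0g$ from the already-established $\Im\tau<0$ theory. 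Similarly, for the uniqueness of subsequential limits in (a), the paper does not use (c) at all: it shows via an energy/Hardy argument that any limit lies in $\dot H^1$, and rules out a nonzero difference $P_0v=0$, $v\in\dot H^1$, by applying the time-dependent local energy decay to the static solution $v\,1_{t\ge0}$. You would need to replace your Rellich step with one of these mechanisms.
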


\begin{proof}
{\bf (a), the case $k=0$}. Denote $v_\epsilon = R_{-i\epsilon} g$.
By the local energy decay estimate \eqref{lesmall} we know that 
\[
\| v_\epsilon\|_{\LE_{i\epsilon}} \lesssim 1
\]
This already suffices in order to obtain the existence 
of a strong limit $v_\epsilon \to v$ in $H^1_{loc}$ on a subsequence.
To prove convergence it suffices to establish the uniqueness
of the limit. For this we first study the regularity of the limit.
A-priori we know that $v \in \LE_0$, but this does not suffice
for uniqueness.

The equation for $v_\epsilon$ has the form $P_{-i\epsilon} v_\epsilon = g$,
or in expanded form
\[
 (P^2 - \epsilon P^1 - \epsilon^2) v_\epsilon = g
\]
with $P^2$ selfadjoint and $P^1$ skew-adjoint.
Since $v_\epsilon \in H^2$,  we cam multiply
by $v_\epsilon$ and integrate by parts to obtain
\[
 \Re \int  g v_\epsilon dx = \int -P^2 v_\epsilon \cdot v_\epsilon+\epsilon^2  v_\epsilon^2 dx   
\]
The principal part of $- P_2$ is positive definite outside 
a compact set $K$, so we obtain
\[
 \|\nabla v_\e\|_{L^2}^2 \lesssim \left|\int  g v_\epsilon dx\right|
+ \int_K |\nabla v_\epsilon|^2 dx 
 + \int  \la r\ra^{-3} |v_\epsilon|^2 dx 
\]
For the first term on the right we use the fact that
$\la r\ra g \in \LE^*$ and $\la r\ra^{-1} v_\epsilon \in \LE$. For the second term on the right we use the $\LE_{i\epsilon}$ bound for $v_{\epsilon}$.
In the last term we use the Hardy inequality to absorb the
far away part into the left hand side, and the $\LE_{i\epsilon}$ norm for the near part. We obtain an uniform bound 
\[
\|\nabla v_\e\|_{L^2}^2 \lesssim \|\la r\ra g\|_{\LE^{*,4}}
\]
so any limit $v$ of a subsequence must belong to $\dot H^1$.
Suppose we have two distinct limit points $v_1$, $v_2$. The
difference $v= v_1-v_2$ must satisfy $v \in \dot H^1$, $P_0 v = 0$.
But this would contradict the local energy decay estimates for
the function $v \ 1_{t \geq 0}$ which solves the homogeneous wave equation in $[0,\infty)$. 
 
It remains to boost the decay of $v$ and its derivatives near infinity
to $\la r \ra v \in \LE_0$.
For this we view $P_0$ as a perturbation of $-\Delta$ near infinity.
Precisely, for large $R$ we consider the equation for $w=\chi_{>R} v$,
\[
 P_0 w = \chi_{>R}(r) g + [P_0,\chi_{>R}(r)]v:= h
\]
Since the above commutator has compact support, we can estimate
\[
 \|\la r \ra  h\|_{\LE^{*}} \lesssim 
\|\la r \ra  g \|_{\LE^{*}} + \|v\|_{\LE_0} \lesssim \|g\|_{\LE^{*,4}}
\]
We rewrite the above equation as
\begin{equation}
 -\Delta w = h + \chi_{> R/2}(r) (P_{lr}+P_{sr}) w
\label{nearinf}\end{equation}
We solve this last equation via a Picard iteration.
It is easy to verify directly that
\[
\|\la r \ra  (-\Delta)^{-1} h \|_{\LE_0}
 \lesssim \|\la r \ra  h \|_{\LE^*}
\]
where $(-\Delta)^{-1}$ is the convolution operator with $r^{-1}$.

On the other hand the last term on the right is perturbative for large $R$,
\[
\|\la r \ra  \chi_{> R/2}(r) (P_{lr}+P_sr) w\|_{\LE^*} \lesssim R^{-1} 
\|\la r \ra  w \|_{\LE_0}
\]
Thus a unique solution $w \in \la r \ra^{-1} \LE_0 \subset \dot H^1$  is found iteratively. On the other hand, by energy estimates as above 
the equation \eqref{nearinf} also has a unique solution in 
$\dot H^1$, therefore the $\la r \ra^{-1} \LE_0 $ coincides with 
the $\dot H^1$ solution constructed from $v$.
  This proves the existence of the limit of $u_\epsilon$ as $\epsilon \to 0$, as well as 
\eqref{r0a} for $k=0$.

{\bf (b), proof of \eqref{r0b}.} The bound \eqref{r0b} is obtained by passing to the limit $\tau = i \epsilon \to 0$ in the estimate \eqref{lesmall}. 
Since $\la r\ra^{-1} \LE^{*,4}$ is dense in $ \LE^{*,4}$, this provides
us with an unique continuous extension which satisfies the bound 
\eqref{r0b}.

{\bf (a), the case $k > 0$.} Now \eqref{r0a} for $k \geq 1$ follows by combining the regularity of $v$ 
provided by \eqref{r0b} for small $r$ with the decay at infinity 
given by the $k=0$ case of \eqref{r0a} plus elliptic regularity
for large $r$. The reason we need to argue in this roundabout way
is that $P_0$ is elliptic for large $r$, but not necessarily for 
small $r$.

{\bf (b), proof of \eqref{zerorad}.} By \eqref{r0b}, it suffices to establish \eqref{zerorad} for $g$ in a dense subset of $\LE^{*,4+k}$.
But this is provided by \eqref{r0a}.

{\bf (c).} This proof is identical to the proof of Proposition~\ref{prext} (iii). First the higher regularity version of \eqref{zeroradsuf} are
established, namely
\[
\lim_{j \to \infty} 2^{-\frac{j}2} \| \nabla^k \partial_r v\|_{L^2(A_j)} = 0, \qquad k \leq 4.
\]
Then the relation $v = P_0 g$ is obtained as the limit of
$v_\epsilon = R_{-i\epsilon} P_{-i\epsilon} v_\epsilon$
for $v_\epsilon= v e^{-\epsilon r}$. 
\end{proof}

Our next result provides better bounds for $R_0$ on functions 
with more regularity and decay.

\begin{proposition}[Higher regularity for $R_0$] 

a)  If  $g \in Z^{k+4,1}$ then the following 
representation holds for large $r$ 
\begin{equation}
R_0 g = c(r) \la r\ra^{-1} + v_1
\label{r0d}\end{equation}
where, with $N$ arbitrarily large, we have the bounds
\begin{equation}
\|c\|_{L^\infty}
+ \|S_r c\|_{l^1 S(1)} +\|v_1\|_{Z^{k,-1}} \lesssim \|g\|_{Z^{k+4,1}}
\label{r0da}\end{equation}
b) If we further have $g \in Z^{k+4,2}$ then  
 $R_0 f$ admits the large $r$ representation
\begin{equation}
R_0 g = c \la r\ra^{-1} + d(r) \cdot \nabla \la r\ra^{-1} + e(r) \la r\ra^{-2} + v_{2}
\label{r0e}\end{equation}
where, with $N$ arbitrarily large, we have the bounds
\begin{equation}
|c| + \|d\|_{L^\infty} + \| S_r d\|_{l^1S(1)} + \|e\|_{S(1)}
 + \|v_{2}\|_{Z^{k,0}} 
\lesssim  \| g\|_{ Z^{k+4,2}}
\label{r0f}\end{equation}
\label{pu0}\end{proposition}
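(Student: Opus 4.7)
The plan is to view $P_0$ as a short-range perturbation of $-\Delta$ at spatial infinity and extract the asymptotic profile of $R_0 g$ by combining Proposition~\ref{pu00}, a $\tau=0$ analogue of the vector-field commutator estimates of Proposition~\ref{putau}, and Newton-potential expansions at infinity. The first step is to adapt that commutator calculus to $\tau = 0$: the identities $[P_0,\Omega] = Q_{sr}$ and $[P_0,S_r] = 2P_0 + Q_{lr} + Q_{sr}$ are inherited from the corresponding $P_\tau$ computation (the $\tau\partial_\tau$ piece of $S$ is absent), so induction on the number of vector fields combined with \eqref{r0a}--\eqref{r0b} yields $\|T^i \Omega^j S_r^k R_0 g\|_{\LE_0} \lesssim \|g\|_{Z^{m+4,0}}$ for $i+j+k \le m$. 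Conjugating $P_0$ by $\la r\ra^{\pm 1}$ introduces only a bounded first-order $l^1 S(r^{-1})$ correction, giving the weighted analogues
\begin{equation*}
\|\la r\ra^n T^i\Omega^j S_r^k R_0 g\|_{\LE_0} \lesssim \|g\|_{Z^{m+4,n}}, \qquad n \in \{0,1,2\},
\end{equation*}
which will be used freely below.

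For part (a), set $v = R_0 g$ and rewrite the equation for large $r$ as $-\Delta v = h$ with $h := g + (P_{lr}+P_{sr})v$. Because $P_{lr}+P_{sr}$ has $S(r^{-1})$ first-order and $S(r^{-3})$ zeroth-order coefficients, pairing against the above bounds for $v$ yields $\|\la r\ra h\|_{\LE^{*,k}} \lesssim \|g\|_{Z^{k+4,1}}$. Cauchy--Schwarz on each dyadic shell gives $\sum_m \|\la r\ra h\|_{L^1(A_m)} \lesssim \|\la r\ra h\|_{\LE^*}$, so $\la r\ra h \in L^1$ with summable dyadic contributions. Convolving $h$ against the Newton kernel $\frac{1}{4\pi|x-y|}$ and using $|x-y|^{-1} = |x|^{-1} + O(|y|/|x|^2)$ on $|y| < |x|/2$ produces
\begin{equation*}
v(x) = \frac{1}{4\pi|x|}\int_{|y|<|x|/2} h(y)\,dy + v_1(x),
\end{equation*}
which (after smoothing near $r=0$) defines the radial coefficient $c(r)$. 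The bound $\|S_r c\|_{l^1 S(1)}$ reduces to the dyadic $l^1$ summability of $\|\la r\ra h\|_{L^1(A_m)}$, while the $Z^{k,-1}$ bound for $v_1$ follows by commuting $T,\Omega,S_r$ through the representation, using the vector-field bounds above for the far-field contribution $|y| \ge |x|/2$ and the pointwise $O(|y|/|x|^2)$ gain on the Taylor remainder.

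For part (b) we iterate. With $\la r\ra^2 g \in Z^{k+4,0}$ we push the Newton-kernel expansion one step further, $|x-y|^{-1} = |x|^{-1} + (x\cdot y)/|x|^3 + O(|y|^2/|x|^3)$, peeling off the dipole contribution $d(r) \cdot \nabla \la r\ra^{-1}$ with $d(r) = -\frac{1}{4\pi}\int_{|y|<r/2} y\, h(y)\,dy$; the bounds on $\|d\|_{L^\infty}$ and $\|S_r d\|_{l^1 S(1)}$ come from the same dyadic $L^1$ analysis applied to $\la r\ra^2 h$. Separately, feeding the leading profile $c_0 \la r\ra^{-1}$ back through the radial long-range operator $P_{lr}$ produces a spherically symmetric source of class $S_{rad}(r^{-4})$; integrating the resulting one-dimensional ODE for the spherical mean of $v - c\la r\ra^{-1} - d\cdot\nabla\la r\ra^{-1}$ yields the explicit $e(r)\la r\ra^{-2}$ correction with $e \in S(1)$. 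The main obstacle is the clean separation of spherical and non-spherical content across the iteration: the radial structure of $P_{lr}$ is essential, since it keeps the back-reaction on the leading profile radial and thus captured by $e$, while the $l^1$ dyadic summability of $P_{sr}$ is what prevents a logarithmic loss when absorbing the remaining anisotropic contributions into $v_2$ at the stated $Z^{k,0}$ level.
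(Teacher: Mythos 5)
Your overall route is the paper's: treat $P_0$ near infinity as a perturbation of $-\Delta$, expand the Newton kernel to first (resp.\ second) order on $|y|<|x|/2$ to extract the monopole (resp.\ dipole) with dyadically $l^1$--summable coefficients, and handle the radial $S_{rad}(r^{-4})$ back-reaction of $P_{lr}$ on $c\la r\ra^{-1}$ by integrating the radial ODE $\partial_r^2(rw)=rh$ to produce $e(r)\la r\ra^{-2}$. The organizational difference is that the paper cuts off to $\{r>R\}$ and runs a contraction in the weighted space $\la r\ra^{-2}\LE^* + R^{\frac12}S_{rad}(r^{-4})$, exploiting $O(R^{-1/2})$ smallness of all back-reaction terms, whereas you treat $(P_{lr}+P_{sr})v$ as a known source controlled by a priori weighted vector-field bounds. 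Both work, but note that the cutoff is not optional in Case B, where $P_0=\Delta+P_{lr}+P_{sr}$ only holds in the exterior domain; and you should also say why $v$ equals the Newton potential of $h$ with no harmonic addition (this follows from $v\in\dot H^1$, as in Proposition~\ref{pu00}).

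Two genuine soft spots. First, the weighted bounds $\|\la r\ra^n T^i\Omega^jS_r^k R_0g\|_{\LE_0}\lesssim\|g\|_{Z^{m+4,n}}$ do not follow by ``conjugating $P_0$ by $\la r\ra^{\pm1}$'': the commutator $\la r\ra\Delta\la r\ra^{-1}-\Delta=-2\la r\ra^{-1}\partial_r+O(r^{-2})$ is a long-range first-order operator with no smallness and no $l^1$ summability, so it is not a bounded correction in your framework. The correct source for $n=1$ is \eqref{r0a} of Proposition~\ref{pu00} (proved there by the near-infinity Picard iteration), combined with the commutator scheme of Proposition~\ref{putau} applied at $\tau=0$; the $n=2$ weighted bound has to be extracted from the iteration itself, which is exactly what the paper's estimate \eqref{fdr} does. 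Second, in part (b) the statement requires $c$ to be a \emph{constant}, while your part-(a) construction produces a function $c(r)=\sum_m c_m\chi_{>m+2}(r)$. You must reassign the tail $(c(r)-\sum_m c_m)\la r\ra^{-1}=-\la r\ra^{-2}\cdot\la r\ra\sum_m c_m\chi_{\leq m+2}(r)$ somewhere; the paper puts it into $e(r)\la r\ra^{-2}$, using $\sum_m 2^m|c_m|\lesssim\|\la r\ra^2h\|_{\LE^*}$ to see that this tail lies in $S(1)$. Your sketch attributes $e$ entirely to the $P_{lr}$ back-reaction and never addresses how $c$ becomes constant, so as written the representation \eqref{r0e} does not close.
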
 
Here, by a slight abuse of notation, by the  $S(1)$ and $l^1S(1)$ 
norm we understand the sum of an arbitrarily large but finite 
number of seminorms for the respective symbol classes.

  We note the key presence of the $e(r) r^{-2}$ term in \eqref{r0e},
which is the highest order contribution due to the long range scalar
potential. This term will later turn out to produce the first 
nonanalytic contribution in the expansion of $R_\tau$ with respect to $\tau$ near $\tau = 0$. Thus this term ultimately determines
the final decay rate in our main theorem.

\begin{proof}

  a) Consider \eqref{r0d}. For the local (small $r$) regularity of $v$
  we only need the bound \eqref{lesmall}. Thus it suffices to prove
  the same estimate for the solution $w$ to the near infinity problem
  \eqref{nearinf}. We begin with the simpler case $h \in \la r
  \ra^{-1} \LE^*$, in which case we seek to prove that $w$ admits a
  representation as in \eqref{r0d} so that
\begin{equation}
 \|c\|_{L^\infty} + \| S_r c\|_{l^1S(1)} +
\sum_{j \leq 2} \|\la r \ra^{-1+j} \nabla_x^j v_1\|_{\LE^*} 
\lesssim \|\la r \ra h\|_{\LE^*}
\label{fitr}\end{equation}
The second right hand side term in \eqref{nearinf} is perturbative with respect to this estimate with a small $O(R^{-1})$ bound, so it suffices to prove that this representation
is valid for the simpler
equation
\[
-\Delta w = h
\]
Then $w$ is computed using the fundamental solution for $-\Delta$ in $\R^3$, namely 
\[
w =  h \ast r^{-1}
\]
We consider a dyadic decomposition of $h$ with respect to the 
$\{A_m\}$ partition of $\R^3$,
\[
h = \sum_m h_m
\]
Correspondingly we partition $w$ into
\[
w = \sum_m w_m^{low} + w_m^{high}, \qquad w_m^{low} = \chi_{<m+2}\ ( h_m  \ast r^{-1})
\]
For the first part we directly estimate in a scale invariant 
 elliptic fashion
\[
2^{-2m} \|w_m^{low}\|_{L^2} + 2^{-m} \|\nabla w_m^{low}\|_{L^2}+ \|\nabla^2 w_m^{low}\|_{L^2} \lesssim \|h_m\|_{L^2}
\]
Hence is easy to see that the sum of the $w_m^{low}$ contributions
can be included in $v_1$.

Consider now the functions $w_m^{high}$, which we expand 
near infinity as
\begin{equation}
w_m^{high} = c_m \chi_{> m+2}  |x|^{-1}  + z_m,
\label{fgk}\end{equation}
where
\[
 c_m = \int h_m(y) dy, \qquad z_m= \chi_{> m+2}  \int h_m(y)  \left(\frac{1}{|x-y|} - \frac{1}{|x|}\right) dy.
\]
By Cauchy-Schwarz we can estimate 
\[
\| c_m\|_{l^1} \lesssim \|\la r \ra  h\|_{\LE^*}
\]
therefore the first term in \eqref{fgk} yields the first term in 
\eqref{r0d}, with 
\[
c(r) = \sum_m c_m \chi_{> m+2} 
\]
The second term in \eqref{fgk}, on the other hand,
 can be incorporated into $v_1$. Indeed we can bound
\[
\left|\frac{1}{|x-y|} - \frac{1}{|x|}\right| \lesssim \frac{|y|}{|x|^2},
\qquad |x| > 2 |y|
\]
which yields the off-diagonal decay
\[
\|\la r \ra^j \nabla^j z_m\|_{L^\infty(A_n)} \lesssim 2^{m-n}, \qquad 
m \geq n+2 
\]
which suffices after summation with respect to $n$ and $m$.
This concludes the proof of \eqref{fitr}. The more general bound 
\eqref{r0da} follows from \eqref{fitr} by elliptic regularity
once we observe that $ -\Delta (c(r) r^{-1}) \in l^1 S(r^{-2})$.

b) Finally we consider the case when $g \in Z^{k+4,2}$.
Again it suffices to prove the same estimate for the solution 
$w$ to the near infinity problem \eqref{nearinf}.
In a manner similar to the previous case, we will first construct a decomposition \eqref{r0e} of $w$ so that the following estimate holds:
\begin{equation}
|c| + \|d\|_{L^\infty} + \| S_r d\|_{l^1S(1)} + R^{-\frac12} \|e\|_{S(1)}  + \sum_{j \leq 2} \|\la r \ra^{2-j} \nabla_x^j v_2\|_{\LE^*} 
 \lesssim \|h \|_{\la r\ra^{-2} \LE^*+ R^{\frac12} S_{rad}(r^{-4})}
\label{fdr}\end{equation}
Once this is done, its higher regularity version \eqref{r0f} 
follows by elliptic theory. 

We remark that in the above estimate we have harmlessly added an $S_{rad}(r^{-4})$ component to $h$. Its purpose is to 
render the second right hand side term in \eqref{nearinf} 
fully perturbative. The $R^{\pm \frac12}$ weights serve the same goal.
To understand how this works we split $w$ into  $w= c \la r \ra^{-1}
+ w_1$, and consider all possible terms:

i) The output of both the short range operator $P_{sr}$
and the long range operator $P_{lr}$ applied to  $w_1$ can be estimated 
as follows:
\[
 \| \chi_{> R} (P_{sr}+P_{lr}) w_1\|_{\la r\ra^{-2} \LE^*} \lesssim R^{-1}
(\|d\|_{L^\infty} + \| S_r d\|_{l^1S(1)} +  \|e\|_{S(1)}  + \sum_{j \leq 2} \|\la r \ra^{2-j} \nabla_x^j v_2\|_{\LE^*} )
\]
which provides an $R^{-\frac12}$ gain compared with \eqref{fdr}.

ii) The output of the short range operator $P_{sr}$ applied to 
$c(r) \la r \ra^{-1}$ satisfies the weaker bound 
\[
 \| P_{sr} c \la r \ra^{-1}\|_{l^1 S(r^{-4})} 
 \lesssim  |c|,
\]
which gives decay when restricted to $r \gtrsim R$,
\[
 \| P_{sr} c \la r \ra^{-1}\|_{\la r\ra^{-2} \LE^*} 
 \lesssim  o_R(1) |c|,
\]
where the decay of the constant as $R \to \infty$ comes from the 
fact that the $l^1$ summation is restricted to dyadic regions 
$A_m$ with $2^m \gtrsim R$.

iii) The output of the $k^\omega(r) \Delta_\omega$
part of $P_{lr}$ applied to 
$c(r) \la r \ra^{-1}$ is zero.

iv) Finally he output of the $k(r)$
part of $P_{lr}$ applied to $c(r) \la r \ra^{-1}$ belongs to $S(r^{-4})$,
and we have an $R^{-\frac12}$ gain compared to \eqref{fdr} due to the choice of constants in \eqref{fdr}.

 Thus we need to prove the bound \eqref{fdr} for an expansion 
\eqref{r0f} of the solution $w$ to the equation
\[
-\Delta w  = h, \qquad \text{supp } h \subset \{ r \gtrsim R\}
\]
We consider first the case when $\la r \ra^2 h \in \LE^*$. 
We proceed as in the proof of \eqref{r0d}-\eqref{r0da}.
The analysis of $w_m^{low}$ rests unchanged. However, 
for $w_m^{high}$ we need a second order expansion at infinity,
namely 
\begin{equation}
w_m^{high} = c_m \chi_{> m+2}  |x|^{-1}  +  d_m^i  \chi_{> m+2}
\partial_i r^{-1} + z_m,
\label{fgka}\end{equation}
where
\[
 c_m = \int h_m(y) dy, \qquad  d_m^i = \int y_i h_m(y) dy
\]
\[
 \qquad z_m=  \chi_{> m+2}  \int h_m(y)  \left(\frac{1}{|x-y|} - \frac{1}{|x|} - \frac{y \cdot x}{|x|^2}\right) dy.
\]
Then we estimate
\[
2^{m} |c_m| + |d_m^i| + \|z_m\|_{Z^{N,0}} \lesssim 
\|\la r \ra^2 h_m\|_{\LE^*}
\]
Hence we can set 
\[
c = \sum_m c_m, \qquad d^i(r) = \sum_{m} d_m^i \chi_{> m+2}(r),
\qquad e(r) =  r \sum_m c_m \chi_{<m+2}(r)  
\]
One easily checks that $c$, $d^i$ and $e$ satisfy bounds as in 
\eqref{fdr}, so the proof of \eqref{fdr} is concluded in this case.

It remains to consider the case when $h \in S_{rad}(r^{-4})$.
Then $w$ is also smooth and spherically symmetric, and we can write
\[
 \partial_r^2 (rw) = rh
\]
Integrating this relation twice, first from infinity and the 
second time from zero, it follows that
$w$ has the form
\[
w_2 = c \la r \ra^{-1} + e(r) \la r\ra^{-2}, \qquad   e \in S_{rad}(1)
\]
Furthermore, since $h$ is supported in $r \gtrsim R$, we obtain
\[
 |c| \lesssim R^{-1}
\]
Thus the proof of \eqref{fdr} for $w$ is concluded.
\end{proof}

The next step is to use the $R_0$ estimates in order to improve the 
analysis of $R_\tau g$ for $\tau$ near $0$, and in particular the 
bounds in Proposition~\ref{putau}, in the case when $g$ has better
decay at infinity. Precisely, as in the previous proposition,
we will successively consider the case when $g \in Z^{k+4,1}$ and 
when $g \in Z^{k+4,2}$. We will use the notation $a \wedge b$
for a smooth $1$-homogeneous function of $a, b \in \R^+$
which equals $\min\{a,b\}$ unless $a \approx b$.

\begin{proposition}
a) Let $g \in Z^{k+4,1}$  and $|\tau| \lesssim 1$ with 
$\Im \tau \leq 0$. Then the function $v(\tau) = R_\tau g$ can be represented as 
\[
 v(\tau) = e^{-i\tau \la r\ra } v(0) + w_1(\tau)
\]
where the function $w_1$ is given by
\[
 w_1(\tau) = R_\tau (\chi_{> |\tau|^{-1}}(r) g + \tau h)
\]
with $h$ satisfying
\begin{equation}
 \|r^m (\partial_r +i\tau)^m T^i \Omega^j S^l h \|_{\LE^{*}} \lesssim 1,
\qquad m+i+ j+ l \leq k
\label{hbd-tau}\end{equation}

b) Assume in addition that $g \in Z^{k+4,2}$. Then $v$ can be represented as
\[
  v(\tau) = (v(0)+\tau v_1 + \tau e_0(r,\tau)) e^{-i\tau r}  + w_2(\tau)
\]
where 
\[
 v_1 = R_0 g_1, \qquad g_1 \in Z^{k,1},
\]
the radial function $e_0$ has the form
\[
 e_0(r,\tau) = r^{-1} e_1(r \wedge |\tau|^{-1}) + \tau 
( e_2(r \wedge |\tau|^{-1}) - e_2(|\tau|^{-1})), \qquad e_1, e_2 \in S(\log r)
\]
and $w_2$ is given by
\[
 w_2(\tau) = R_\tau (\chi_{> |\tau|^{-1}} g + \tau^2 h)
\]
with $h$ satisfying \eqref{hbd-tau}.

\label{pusmall}
\end{proposition}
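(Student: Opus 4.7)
The strategy is to construct an explicit ansatz for $v(\tau) = R_\tau g$ that captures the leading outgoing-wave asymptotics at spatial infinity via the zero-frequency resolvent, and then to identify the remainder as $R_\tau$ applied to an improved source using the radiation-condition characterization from Proposition~\ref{prext}(iii).

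For part (a), set $v_0 = R_0 g$ (available by Proposition~\ref{pu00}) and define the ansatz $U := e^{-i\tau\la r\ra}v_0$. Using $P^2 v_0 = g$ and expanding the commutators, one computes
\begin{equation}
P_\tau U = e^{-i\tau\la r\ra}\Bigl[g - 2i\tau\,\nabla\la r\ra\cdot\nabla v_0 - i\tau(\Delta\la r\ra)v_0 - i\tau P^1 v_0\Bigr] + \tau\cdot(\text{short range from }[P_{sr}^2,e^{-i\tau\la r\ra}]) + \tau^2(1-|\nabla\la r\ra|^2)U,
\end{equation}
where we used that $P_{lr} = g^\omega_{lr}\Delta_\omega + V_{lr}$ commutes with the radial factor $e^{-i\tau\la r\ra}$, and that $|\nabla\la r\ra|^2 = 1$ for large $r$ so the last term has compact support. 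Splitting $g(1-e^{-i\tau\la r\ra}) = (1-e^{-i\tau\la r\ra})\chi_{<|\tau|^{-1}}g + \chi_{>|\tau|^{-1}}g - \chi_{>|\tau|^{-1}}e^{-i\tau\la r\ra}g$ isolates the required term $\chi_{>|\tau|^{-1}}g$; the first summand is $O(|\tau|r g)$ and the third is $O(|\tau|\la r\ra g)$ in the far region, both absorbable into $\tau h$ thanks to the $Z^{k+4,1}$ decay of $g$. Defining $h$ by collecting all these pieces gives $g - P_\tau U = \chi_{>|\tau|^{-1}}g + \tau h$.

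The bound \eqref{hbd-tau} on $h$ rests on the algebraic identity $(\partial_r + i\tau)e^{-i\tau\la r\ra} = e^{-i\tau\la r\ra}\partial_r$ valid for large $r$ (where $\partial_r\la r\ra = 1$): the weighted derivatives $r^m(\partial_r+i\tau)^m$ pass through the oscillatory factor as $e^{-i\tau\la r\ra}(r\partial_r)^m$ without producing extra $\tau$ powers, reducing the claim to scale-invariant $\LE^*$ estimates on $T^i\Omega^j S_r^l$ applied to $\la r\ra g$ and to $\nabla v_0$. These follow from the hypothesis $g \in Z^{k+4,1}$ together with the decomposition $v_0 = c\la r\ra^{-1} + v_1$ with $v_1 \in Z^{k,-1}$ from Proposition~\ref{pu0}(a). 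Setting $w_1 := v(\tau) - U$, we have $P_\tau w_1 = \chi_{>|\tau|^{-1}}g + \tau h$, and the outgoing radiation condition \eqref{outrad} for $w_1$ follows from the one for $v(\tau) = R_\tau g$ (Proposition~\ref{prext}(ii)) together with $(\partial_r+i\tau)U = e^{-i\tau\la r\ra}\partial_r v_0$ for large $r$ and the vanishing \eqref{zerorad} for $R_0 g$. Proposition~\ref{prext}(iii) then identifies $w_1 = R_\tau(\chi_{>|\tau|^{-1}}g + \tau h)$ as claimed.

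For part (b), one iterates: apply part (a) once to obtain $v(\tau) = e^{-i\tau\la r\ra}v_0 + R_\tau(\chi_{>|\tau|^{-1}}g + \tau h)$, and then apply part (a) again to $R_\tau h$, using that the improved decay $g \in Z^{k+4,2}$ propagates to a stronger $Z^{k,1}$-type bound on $h$ (verifiable from the explicit formula above together with Proposition~\ref{pu0}(b)). The resulting iterated expansion contributes $\tau v_1 e^{-i\tau\la r\ra}$ with $v_1 = R_0(\ldots)$; the discrepancy between $e^{-i\tau\la r\ra}$ and the stated $e^{-i\tau r}$ is compactly supported and absorbs into $w_2$. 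The non-analytic piece $\tau e_0(r,\tau)e^{-i\tau r}$ arises from the $e(r)\la r\ra^{-2}$ term in the expansion \eqref{r0e} of $R_0 g$, which encodes the contribution of the long range potential $V_{lr}\in S_{rad}(r^{-3})$. When this $r^{-2}$ tail is inverted against the near-infinity model $(\Delta+\tau^2)^{-1}$, whose kernel is $|x|^{-1}e^{-i\tau|x|}$, the convolution fails to be jointly smooth in $(r,\tau)$ at the origin and produces logarithmic factors frozen at the scale $r\wedge|\tau|^{-1}$, which is precisely the structure of $e_0$ with $e_1, e_2 \in S(\log r)$. The principal obstacle is this last step: one must verify during the iteration that exactly one such $e_0$-type logarithmic obstruction is generated, has the stated frozen-scale form, and that all other contributions either cancel against the explicit phase $e^{-i\tau r}$ or remain perturbative remainders controllable by $R_\tau$ applied to a $\tau^2$-small source.
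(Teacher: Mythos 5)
Part (a) of your proposal is essentially the paper's argument: the ansatz $e^{-i\tau\la r\ra}R_0g$, the computation of $P_\tau$ applied to it (with the observation that the long range part commutes with the radial phase), the three-way splitting of $g(e^{-i\tau\la r\ra}-1)$ at scale $|\tau|^{-1}$, the reliance on the representation \eqref{r0d}--\eqref{r0da} of $R_0g$ (in particular the improved $l^1S(1)$ bound on $S_rc$), and the identification of the remainder via the radiation condition. That part is fine.

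Part (b) has a genuine gap, and it sits exactly where you flag "the principal obstacle." Your mechanism --- reapply part (a) to the error, "using that the improved decay $g\in Z^{k+4,2}$ propagates to a stronger $Z^{k,1}$-type bound on $h$" --- fails for two of the error terms. After inserting the refined expansion \eqref{r0e} of $v(0)$ into the term $2\tau(\partial_r+\tfrac1r)v(0)e^{-i\tau\la r\ra}$, the monopole $c\la r\ra^{-1}$ drops out and the $v_2$ piece is fine, but the dipole piece $d_i(r)\partial_i\la r\ra^{-1}$ and the long-range piece $e(r)\la r\ra^{-2}$ each produce sources that are only $S(r^{-3})$, not $l^1S(r^{-3})$; and an $S(r^{-3})$ function logarithmically fails to lie in $\la r\ra^{-1}\LE^*$, i.e.\ fails the $Z^{\cdot,1}$ decay your iteration requires. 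These borderline terms are precisely why the proposition's statement contains the non-analytic $e_0$: the paper compensates the dipole term with the explicit corrector $\tau d_i(r)\partial_i(\ln r)e^{-i\tau\la r\ra}$ (using the $l^1S(1)$ bound on $S_rd_i$ to control the residue), and handles the long-range term by solving $\Delta v_3=(\partial_r+\tfrac1r)e(r)$, truncating the resulting $\la r\ra^{-1}e_1(r)$ at the frozen scale $r\wedge|\tau|^{-1}$ so that the logarithm does not leak into the region $r>|\tau|^{-1}$, and then iterating once more (solving $\Delta e_2=\la r\ra^{-1}\partial_re_1$) because the first correction's error, though carrying $\tau^2$, is again only $S(r^{-2})$ and logarithmically misses $\LE^*$. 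You correctly diagnose the long-range term as the origin of $e_0$ and correctly read off its frozen-scale form, but you do not construct the correctors, you miss the dipole term entirely, and the claim that $h$ lands in $Z^{k,1}$ is false as stated. Since producing these explicit corrections and verifying their residues is the substance of part (b), the proposal does not yet constitute a proof of it.
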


\begin{proof}
a) We note that the function $e^{-i\tau \la r\ra } v(0)$ belongs to the
range of $R_\tau$ since it satisfies the outgoing radiation condition
\eqref{outrad}. We compute the equation for the
 function $w_1= v(\tau) - e^{-i\tau r} v(0)$:
\begin{equation}
\begin{split}
P_\tau w_1 = &\ g+  (g e^{-i\tau \la r \ra}-g) + 2\tau (\partial_r +\frac{1}r)v(0) e^{-i\tau \la r \ra}\\ &\ + \left(\tau l^1S(r^{-2})+ \tau^2 l^1 S (r^{-1})+ \tau l^1 S(r^{-1}) \nabla\right) v(0)
 e^{-i\tau \la r \ra} 
\end{split}
\label{ptauw}\end{equation}
where the second term on the right comes from the Laplacian and the 
third is generated by the short range part of $P$. We observe that the 
long range part of $P$ does not produce any contributions because
the function $e^{i\tau \la r\ra}$ is radial. 

For $v(0)$ we use the representation \eqref{r0d} and the corresponding bounds \eqref{r0da}. This allows us to estimate all terms on the right but 
the first one as in \eqref{hbd-tau}. We note in particular that
in order to estimate the contribution of the first term in \eqref{r0d}
to the second term above, it is necessary to take advantage of the improved $l^1S(1)$ bound on $S_r c$ in \eqref{r0da}.

Finally we consider the first term on the right in \eqref{ptauw},
which we rewrite as
\[
 g e^{-i\tau \la r\ra}-g = \chi_{< |\tau|^{-1}} g (e^{-i\tau \la r\ra}-1)
+ \chi_{> |\tau|^{-1}}g e^{-i\tau \la r\ra} - \chi_{> |\tau|^{-1}}g
\]
In the first term we can pull out a $\tau \la r\ra$ factor from the difference
$1-e^{-i\tau \la r\ra}$, and then \eqref{hbd-tau} follows. For the 
second factor,  \eqref{hbd-tau} is obtained directly.

b) We start again with \eqref{ptauw}, but now we can only place into $h$
the terms which have a $\tau^2$ factor; we call such terms {\em negligible 
errors}. This includes the middle term
in the last expression in \eqref{ptauw}. Now $v$ is as in \eqref{r0e}-\eqref{r0f},
therefore  the remaining part of the same expression can be written in the form
\[
 \tau ( l^1 S(r^{-2})+ l^1S(r^{-1}) \nabla ) v(0)e^{-i\tau \la r\ra}:= \tau g_1 e^{-i\tau \la r\ra}, \qquad g_1 \in Z^{k,1}
\]
and use the argument in part (a). Since $g_1$ already has a factor 
of $\tau$, the error in part (a) will have a factor of $\tau^2$.
We further note that, compared to part (a), the error 
$\tau g_1 e^{-i\tau \la r\ra}$ is better because it already contains 
the oscillatory factor $e^{-i\tau \la r\ra}$; we call such terms 
{\em type $1$ negligible errors}.

It remains to consider the second term in \eqref{ptauw}. This is where
we need to take advantage of the more precise representation 
for $v(0)$ in \eqref{r0e}-\eqref{r0f}. The contribution 
of $c r^{-1}$ vanishes, while the contribution of $v_2$ in \eqref{r0e}
is type $1$ negligible.

Consider now the error term $\tau 
(\partial_r +\frac{1}{r}) \left( d_i(r) \partial_i r^{-1}\right)e^{-i \tau \la r \ra}$,
arising from the second term in \eqref{r0e}. We compensate
for it with the explicit correction
\[
 \tau v_2 = \tau d_i(r) \partial_i (\ln r) e^{-i\tau \la r\ra}
\]
A direct computation shows 
that 
\[
\begin{split}
 P_\tau v_2 - (\partial_r +\frac{1}{r}) \left( d_i(r) \partial_i r^{-1}\right) e^{-i \tau \la r\ra} = &\ 
(P_\tau - (\Delta + \tau^2)) v_2  \\ &\ +
\partial_r d_i (\partial_i r^{-1}) e^{-i \tau \la r \ra}
+ \partial_r^2 d_i \partial_i (\ln r) e^{-i\tau \la r\ra } 
\end{split}
\]
It suffices to show that the right hand side can be expressed in the form
$g_2  e^{-i\tau \la r\ra } + \tau h $ with $g_2 \in l^1S(r^{-3})$ and $h$ as in \eqref{hbd-tau}. This follows by taking into account the 
effect of the short and long range terms 
in $P_\tau$ for the first term, and by using the bound on $S_r{d_i}$ in \eqref{r0f} for the last two terms.

Finally we consider the term $\tau (\partial_r +\frac{1}r) e(r) e^{-i\la r\ra \tau} \in S^{-3} e^{-i\la r\ra \tau}$. This is the term which arises due to the long range part of $P_\tau$, and which is in 
effect responsible for the decay rate in our main theorem.
Here there  is no mechanism that can eliminate  logarithmic type 
corrections for the resolvent. The redeeming feature of this case 
is that we can work fully with spherical symmetry, and that we 
can replace $P_\tau$ by $\Delta+\tau^2$ modulo negligible perturbations.

We begin by solving the equation which corresponds to $\tau = 0$,
namely
\[
\Delta v_3 = (\partial_r +\frac{1}r) e(r) \in S(r^{-3})
\]
and we obtain a solution of the form $v_3 = \la r\ra ^{-1} e_1(r)$ with $e_1 \in S_{rad}( \log r)$.
However, unlike all previous cases, here the function 
$\tau v_3 e^{-i\tau \la r \ra}$ can no longer be a suitable correction, because that would inherit the $\log r$ term even when
$\tau > r^{-1}$. Instead we consider the modified function $\tilde v$ 
\[
\tilde v(r,\tau) = \la r \ra^{-1} e_1(r \wedge |\tau|^{-1})
\]
and use $v_3=\tau \tilde v(\tau,r) e^{- i\tau \la r\ra}$ as the correction.
A direct computation shows that
\[
P_\tau v_3 = \tau (\partial_r +\frac{1}r) e(r) e^{-i \tau \la r\ra} + 
2\tau^2 (\partial_r +\frac{1}r) \tilde v e^{-i \tau \la r\ra}
+ \tau g_3(r) e^{-i \tau \la r\ra} + \tau^2 h
\]
Here $g_3 \in l^1 S(r^{-3})$ and $h$ as in \eqref{hbd-tau}
are negligible errors generated by the long range and short range 
part of $P_\tau$.
Thus only  the second term provides non-negligible errors and 
needs further attention.
Modulo terms which can be included in $h$,
the expression $(\partial_r +\frac{1}r) \tilde v$ has the form 
\[
(\partial_r +\frac{1}r) \tilde v = \chi_{< |\tau|^{-1}}(r) 
\la r \ra \partial_r e_1(r) +  h
\]

Even though this comes with a $\tau^2$ factor, it is not quite 
negligible because it only has decay $\la r \ra^{-1} \partial_r e_1 \in S(r^{-2})$  at infinity so it logarithmically fails to be in $\LE^*$
uniformly with respect to $\tau$. Instead we need to reiterate again. 

We solve 
\[
 \Delta e_2 = \la r \ra^{-1} \partial_r e_1 
\]
and obtain a solution $e_2 \in S(\log r)$. Then we choose the last
correction of the form
\[
 v_4 = \tau^2 \la r \ra^{-1} ( e_2(r \wedge |\tau|^{-1}) - e_2(|\tau|^{-1}))
 e^{-i\tau \la r\ra}
\]
Here it makes no difference whether we add the factor $e^{-i\tau r}$ 
or not, as $v_4$ is supported in $\{ r |\tau| \lesssim 1\}$ and does not 
see any oscillations. Again, a direct computation shows that
\[
 P_\tau v_4 = \tau^2 \chi_{< |\tau|^{-1}}(r) 
\la r \ra \partial_r^{-1} e_1 e^{-i\tau \la r \ra} + \tau^2 h 
\]
with $h$ as in \eqref{hbd-tau}, a negligible error. 
Now the first term has a better $\tau^2$ factor, so it is also negligible. 
The proof of the proposition is concluded.

\end{proof}

\section{ From $L^2$ to pointwise bounds}
\label{linf}
In this section we supplement the weighted $L^2$ resolvent bounds with
corresponding pointwise bounds. Given a $\tau$ dependent function
$g(\tau)$ which is defined in $\Im \tau \leq 0$ or in an open subset
of it, we look for pointwise bounds for $R_\tau g$ and its
derivatives.  First we consider the case when $g$ has only $\LE^*$
type decay at spatial infinity.

\begin{proposition}

Let $g \in \LE^*$, depending on $\tau$, so that \eqref{oslarge} holds. 

(i) Large $\tau$, $|\tau| \geq 1$.  Then the following  pointwise bounds are valid:
\begin{equation}
| T^i \Omega^j S^k R_\tau g|\lesssim \la r\ra^{-1}, 
\qquad i+4j+16k \leq N -12
\label{pointhigh}\end{equation}

(ii) Small $\tau$, $|\tau| \leq 1$.  Then the following  pointwise bounds are valid,
\begin{equation}
| T^i \Omega^j S^k R_\tau g|\lesssim \left\{
\begin{array}{lc} \min\{1,(|\tau| \la r\ra )^{-1}\} & i = 0 \cr
\la r\ra^{-1} & i=1 \cr \la r\ra^{-2} + |\tau| \la r\ra^{-1} & i \geq 2
\end{array} \right.\qquad i+4j+16 k \leq N -12
\label{pointlow}\end{equation}

(iii) In addition, if $\tau$ is real then
the outgoing radiation condition holds:
\begin{equation}
\lim_{|x| \to \infty} r (\partial_r + i \tau) 
T^i \Omega^j S^k R_\tau g = 0 \qquad i+4j+16k \leq N -12
\label{radpoint}\end{equation}
\label{p:point}
\end{proposition}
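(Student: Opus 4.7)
The plan is to establish the pointwise bounds via a Klainerman--Sobolev style embedding that converts the weighted dyadic $L^2$ bounds furnished by Proposition \ref{putau} into pointwise bounds, using the vector fields $\Omega$ and $S$ as replacements for spatial derivatives scaled by $r$. The starting point is the dyadic Sobolev embedding
$$
|w(x)| \lesssim 2^{-3m/2}\sum_{\ell + |\alpha| \leq 2} \|S_r^\ell \Omega^\alpha w\|_{L^2(\tilde A_m)}, \qquad x \in A_m,
$$
valid for any smooth $w$ on a slight enlargement $\tilde A_m$, obtained by rescaling $A_m$ to unit size, applying $H^2(\R^3)\hookrightarrow L^\infty$, and using that on $A_m$ the spatial gradient $\nabla$ is comparable to $2^{-m}(\Omega + S_r)$ up to bounded coefficients.

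For parts (i) and (ii), I apply this embedding to $w = T^i \Omega^j S^k R_\tau g$ and its further $\Omega$, $S$ commutators. Unpacking the $\LE_\tau$ definition \eqref{letau} yields the dyadic bounds $\|v\|_{L^2(A_m)} \lesssim 2^{m/2}\min(|\tau|^{-1}, 2^m)$, $\|\nabla v\|_{L^2(A_m)} \lesssim 2^{m/2}$, and $\|\nabla^2 v\|_{L^2(A_m)} \lesssim 2^{m/2}(|\tau| + \la r\ra^{-1})$ for each commutator $v$. The case $i=0$ uses the $(|\tau|+\la r\ra^{-1})v$ term in $\LE_\tau$ and, combined with the embedding, produces $\min(1,(|\tau|\la r\ra)^{-1})$. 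The case $i=1$, with $v = Tw$, uses the $\nabla v$ term and produces $\la r\ra^{-1}$. The case $i\geq 2$ reduces to $i=2$ via the PDE $P_\tau w = g$ (which trades $T^2$ for lower-order terms and a copy of $g$), then uses the $(|\tau|+\la r\ra^{-1})^{-1}\nabla^2 v$ term to produce $\la r\ra^{-2} + |\tau|\la r\ra^{-1}$. The large-$\tau$ bound \eqref{pointhigh} follows as a special case since $\min(|\tau|^{-1}, \la r\ra) \leq \la r\ra$.

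For part (iii), I apply the same embedding to $w = r(\partial_r + i\tau) T^i \Omega^j S^k R_\tau g$. By the Corollary following Proposition \ref{prext}, all iterated commutators $T^i \Omega^j S^k R_\tau g$ satisfy the $L^2$ outgoing radiation condition \eqref{outrad}, and the embedding transfers this dyadic $L^2$ decay into the pointwise radiation condition \eqref{radpoint}. The principal technical obstacle is bookkeeping in the preceding paragraph: the embedding naturally uses the purely spatial field $S_r = r\partial_r$, whereas Proposition \ref{putau} furnishes bounds only for the full scaling $S = -\tau\partial_\tau + r\partial_r$. This is reconciled via $S_r = S + \tau\partial_\tau$ together with the resolvent identity $P_\tau \partial_\tau R_\tau g = \partial_\tau g - (2\tau - iP^1)R_\tau g$, which expresses $\tau\partial_\tau R_\tau g$ as an additional application of $R_\tau$ that can again be estimated by Proposition \ref{putau} at the cost of a few extra vector fields in the budget, accounting for the loss from $N-4$ down to $N-12$ in the threshold.
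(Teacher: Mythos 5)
Your overall plan (convert the weighted dyadic $L^2$ bounds of Proposition~\ref{putau} into pointwise bounds by a Klainerman--Sobolev device) is the right philosophy, but the specific embedding you use does not deliver the stated estimates, and the fix you propose for the $S_r$ versus $S$ discrepancy does not work. The full three--dimensional embedding $|w|\lesssim 2^{-3m/2}\sum_{\ell+|\alpha|\le 2}\|S_r^\ell\Omega^\alpha w\|_{L^2(\tilde A_m)}$ only produces the claimed gains if each $S_r^\ell\Omega^\alpha w$ obeys the \emph{same} dyadic bound as $w$, with no loss per vector field. Proposition~\ref{putau} provides this for $\Omega$ and for the full scaling $S=-\tau\partial_\tau+r\partial_r$, but not for $S_r$: the crude substitute $\|S_r w\|_{L^2(A_m)}\lesssim 2^m\|\nabla w\|_{L^2(A_m)}\lesssim 2^{3m/2}$ destroys exactly the $(|\tau|\la r\ra)^{-1}$ gain you need for $i=0$ when $r>|\tau|^{-1}$ (and already ruins the $\la r\ra^{-1}$ bound for $i=1$). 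Your proposed repair --- writing $S_r=S+\tau\partial_\tau$ and treating $\tau\partial_\tau R_\tau g=R_\tau\bigl(\tau\partial_\tau g-2\tau^2R_\tau g+i\tau P^1R_\tau g\bigr)$ by another application of Proposition~\ref{putau} --- fails because the source $\tau^2R_\tau g$ is not in $\LE^{*}$: the $\LE$ bound on $\tau R_\tau g$ is a supremum over dyadic annuli, and converting it to the summed $\LE^{*}$ norm gives $\sum_k 2^{k/2}\,\tau^2\|R_\tau g\|_{L^2(A_k)}\gtrsim |\tau|\sum_k 2^k=\infty$. This is precisely why the combined field $S$ (with its good commutator $[P_\tau,S]=2P_\tau+Q_{lr}+Q_{sr}$) is used in the paper and why neither $S_r R_\tau g$ nor $\tau\partial_\tau R_\tau g$ is ever estimated separately in weighted norms near spatial infinity.

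The missing ingredient is a genuinely one--dimensional argument in the radial variable. The paper performs the Sobolev embedding only on the sphere (costing two $\Omega$'s and nothing in the dyadic weights), which yields $L^2_rL^\infty_\omega$ bounds, and then treats $r$ via the reduced equation $(\partial_r^2+\tau^2)(rv_{ijk})=F$ with $\sum_m 2^{m/2}\|F\|_{L^2_rL^\infty_\omega(A_m)}\lesssim1$, convolving with the explicit fundamental solution $K_\tau(s)=\tau^{-1}e^{i\tau|s|}$. The amplitude $|K_\tau|\lesssim|\tau|^{-1}$ is the sole source of the $(|\tau|\la r\ra)^{-1}$ gain for $i=0$, and $|\partial_sK_\tau|\lesssim1$ gives the $\la r\ra^{-1}$ bound for $i=1$; no purely Sobolev (interpolation--type) estimate in $r$ can recover more than the square root of this gain from the available $L^2$ data. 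Likewise for part (iii): applying your embedding to $r(\partial_r+i\tau)v_{ijk}$ generates the commutator term $r\partial_r v_{ijk}$, which is only $O(2^{3j/2})$ in $L^2(A_j)$, not $o(2^{3j/2})$, so the embedding yields $O(1)$ rather than the required $o(1)$. The paper instead factors $\partial_r^2+\tau^2=(\partial_r-i\tau)(\partial_r+i\tau)$ and integrates the resulting first--order ODE for $(\partial_r+i\tau)(rv_{ijk})$ from infinity, using the averaged radiation condition \eqref{outrad}; some version of this ODE integration is indispensable.
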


\begin{proof}
(i) Denote $v = R_\tau g$ and 
$g_{ijk} = T^i \Omega^j S^k R_\tau g$,
$v_{ijk} = T^i \Omega^j S^k R_\tau g$.
By Proposition~\ref{putau},  $g_{ijk}$ satisfy \eqref{ossmall}.
 Using the Sobolev embeddings on the sphere
\[
 \|\phi\|_{L^\infty(\S^2)} \lesssim \|\phi\|_{L^2(\S^2)}+ \| \Omega^2 \phi\|_{L^2(\S^2)}
\]
we obtain for $i+4j+16k < N-12$:
\begin{equation}
 \sum_m  2^{\frac{3m}2} \| g_{ijk} \|_{L^2_r L^\infty_\omega(A_m)} 
\lesssim 1, \qquad  i+4j+16k < N-8
\label{pg}\end{equation}
respectively
\begin{equation}
 \| v_{ijk} \|_{L^2_r L^\infty_\omega(A_m)}
\lesssim  2^{-\frac{m}2} , \qquad i+4j+16k < N-12
\label{pu}\end{equation}
Using these bounds we revisit the equation for $v_{ijk}$
which we can rewrite in the form
\begin{equation}
 (\partial_r^2 + \tau^2)(r v_{ijk}) = r^{-1} \Delta_\omega v_{ijk}
+ r Q_{lr} v_{\leq i,\leq j,\leq k} + r Q_{sr} v_{\leq i,\leq j,\leq k}
+ r g_{\leq i,\leq j,\leq k}
\label{longeq}\end{equation}
In view of \eqref{pg} and \eqref{pu}  we obtain
\begin{equation}
 \sum_m 2^{\frac{m}2} \| (\partial_r^2 + \tau^2)(r v_{ijk}) \|_{L^2_r L^\infty_\omega(A_m)}
\lesssim 1, \qquad i+4j+16k < N-20
\label{leas}\end{equation}
Using \eqref{pu} for $v_{ijk}$ and $\partial_r v_{ijk}$ we can localize
$v_{ijk}$ so that the above bound is preserved,
\[
 \| (\partial_r^2 + \tau^2)(\chi_{A_m} r v_{ijk}) \|_{L^2_r L^\infty_\omega(A_m)}
\lesssim 2^{-\frac{m}2} 
\]
Now we use the following fundamental solution for $\partial_r^2 - \tau^2$,
\[
 K_\tau(s) = \tau^{-1} e^{i\tau|s|}
\]
This satisfies the bounds
\[
 |K_\tau(s)| \lesssim |\tau|^{-1}, \qquad |\partial_s K_\tau(s)| \lesssim 1
\]
Then the last bound  leads to 
\[
|\tau| \| \chi_{A_m} r v_{ijk}\|_{L^\infty(A_j)} +  \|\partial_r (\chi_{A_m} r v_{ijk})\|_{L^\infty(A_j)}
\lesssim 1
\]
which we rewrite in the form
\begin{equation}
|\tau| |v_{ijk}| + |\partial_r v_{ijk}| \lesssim \la r\ra^{-1}
\end{equation}
This concludes the proof of \eqref{pointhigh}.

(ii) The arguments above remain valid in dyadic regions $A_m$ with
$2^m > |\tau|^{-1}$, and directly yield \eqref{pointlow} for
$i = 0$ and $i=1$; in the latter case we need the additional observation
that $T$ can be expressed as a bounded linear combination
of $\partial_r$ and $r^{-1} \Omega$. 

 Similarly, we can express $T^2$ in terms of $\partial_r^2$, 
$r^{-1} \partial_r \Omega$ and $r^{-2} \Omega^2$. Thus, in view of
\eqref{pointlow} for $i=0,1$, for the $i = 2$ case of \eqref{pointlow} it suffices to bound  $\partial_r^2 \Omega^j S^k v$. For this we 
return to \eqref{longeq}, move the term $\tau^2 v_{ijk}$ on the right
and use \eqref{pointlow} for $i=0,1$. This allows us to 
obtain a favourable bound for all terms on the right except for
the $\partial_r^2$ part of $Q_{sr}$. We obtain
\[
\| \partial_r^2 v_{0jk} \|_{L^\infty(A_m)} \lesssim 
|\tau| \la r\ra^{-1} + 2^{-m} \| \partial_r^2 v_{0,\leq j,\leq k} \|_{L^\infty(A_m)}
\]
The second term on the right does not appear when $j=0$ and $k=0$.
Hence we can use induction with respect to  $j,k$ to obtain
\[
\| \partial_r^2 v_{0jk} \|_{L^\infty(A_m)} \lesssim 
|\tau| \la r\ra^{-1}
\]
and conclude the proof of \eqref{pointlow} when $i=2$.
The case $i > 2$ is  identical.

It remains to consider the case when $m$ is small, namely
$2^{m} \lesssim |\tau|^{-1}$. There the argument is slightly different.
Again by the Sobolev embeddings on the sphere, from \eqref{ossmall} we obtain
\begin{equation}
2^{-m} \| v_{ijk} \|_{L^2_r L^\infty_\omega(A_m)} +
 \|\nabla v_{ijk} \|_{L^2_r L^\infty_\omega(A_m)}+
2^{m} \|\nabla^2 v_{ijk}\|_{L^2_r L^\infty_\omega(A_m)}
\lesssim  2^{-\frac{m}2} 
\label{pu2}\end{equation}
Then the bound \eqref{pointlow} for $i=0,1$ 
follows by Sobolev embeddings  with respect to the radial variable.
Finally we obtain \eqref{pointlow} for $i \geq 2$ directly
from the equation \eqref{longeq} as above.

iii) Suppose $\tau$ is real. From the spherical Sobolev embeddings
and the $L^2$ radiation condition \eqref{outrad} applied
to the functions $v_{ijk}$ we obtain
\begin{equation}
 \lim_{m \to \infty} 2^{\frac{m}2}
\| (\partial_r +i \tau) v_{ijk}\|_{L^2_r L^\infty_{\omega}} 
= 0
\label{aveout}\end{equation}
Writing $\partial_r^2+ \tau^2 = (\partial_r - i \tau)(\partial_r
+i\tau)$ we interpret \eqref{leas} as a first order ode for the
functions $(\partial_r +i \tau) (r v_{ijk})$. The averaged bound
\eqref{aveout} allows us to integrate this ode from infinity to obtain
\[
 \lim_{r \to \infty} (\partial_r +i \tau) (r v_{ijk}) = 0
\]
which implies \eqref{radpoint}.

\end{proof}

Next we consider a more favourable case when $g$ either has better
decay at spatial infinity, or it has an oscillatory behavior which
matches the one of the resolvent. For simplicity we restrict ourselves
to real $\tau$. If $\tau$ is large then only the former alternative
needs to be considered. In this case we will prove the following:

\begin{proposition}
  Let $g \in Z^{m,n}$ with $m \gg n$. Then for real $\tau$ with
  $|\tau| \gtrsim 1$, the function $v = R_\tau g$ satisfies the
  following pointwise bounds:
\begin{equation}
 |(\tau \partial_\tau)^l  (v e^{i \tau \la r\ra})| \lesssim  
|\tau|^{l} \la r\ra^{-l-1}, \qquad 
l \leq n
\label{pointlarge}\end{equation}
\label{p:pointlarge}\end{proposition}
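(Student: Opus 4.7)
The natural quantity is $w = v e^{i\tau \la r\ra}$, which by the outgoing radiation condition \eqref{radpoint} is non-oscillatory at infinity. Conjugating by $e^{i\tau\la r\ra}$ gives
\[
L_\tau w = e^{i\tau\la r\ra}g, \qquad L_\tau := e^{i\tau\la r\ra} P_\tau e^{-i\tau\la r\ra},
\]
and at large $r$,
\[
L_\tau = \Delta - 2i\tau(\partial_r + r^{-1}) + (\text{short- and long-range perturbations of }P).
\]
Multiplying $w$ by $r$ absorbs the $-2i\tau/r$ term, leaving an essentially first-order transport-type equation for $W = rw$ that can be integrated from infinity using the radiation condition as the boundary data.

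The argument proceeds by induction on $l$. The base case $l = 0$ reduces to $|w| = |v| \lesssim \la r\ra^{-1}$, given by Proposition~\ref{p:point}(i); the hypothesis $g \in Z^{m,n}$ implies \eqref{oslarge} for the required commutators. For the inductive step I would apply $(\tau\partial_\tau)^l$ to $L_\tau w = \tilde g$. Since $\tau\partial_\tau$ is a derivation and acts on each $\tau^j$-monomial of $L_\tau$ by the scalar $j$, one obtains
\[
L_\tau\bigl[(\tau\partial_\tau)^l w\bigr] = (\tau\partial_\tau)^l \tilde g + \sum_{k < l} c_{k,l}\, L_\tau^{(l-k)}\,(\tau\partial_\tau)^k w,
\]
where $L_\tau^{(j)}$ differs from $L_\tau$ only by bounded constants on each $\tau$-monomial, so the commutator terms are controlled by the inductive hypothesis. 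The direct forcing
\[
(\tau\partial_\tau)^l\tilde g = \sum_{j \leq l} c_{l,j}(i\tau\la r\ra)^j e^{i\tau\la r\ra}g
\]
has leading pointwise size $|\tau|^l \la r\ra^l |g|$; the hypothesis $g \in Z^{m,n}$ with $n \geq l$ is precisely what makes $\la r\ra^l g$ controlled in $\LE^*$ together with its $T$, $\Omega$, and $S_r$ commutators.

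The target bound $|(\tau\partial_\tau)^l w| \lesssim |\tau|^l \la r\ra^{-l-1}$ then follows by rerunning the argument of Proposition~\ref{p:point}(i) on the resulting equation for $(\tau\partial_\tau)^l w$: factor $\partial_r^2 + \tau^2 = (\partial_r - i\tau)(\partial_r + i\tau)$, integrate from infinity using the fundamental solution $K_\tau$ with $|K_\tau| \lesssim |\tau|^{-1}$, and exploit the oscillation $e^{i\tau\la r\ra}$ in the forcing via integration by parts in $r$ to generate the extra $\la r\ra^{-l}$ decay beyond the generic $\la r\ra^{-1}$ bound. The principal obstacle is propagating the radiation condition \eqref{radpoint} to $(\tau\partial_\tau)^l w$ at each inductive stage: this requires first establishing an \emph{a priori} weighted $\LE_\tau^k$-type bound on $(\tau\partial_\tau)^l w$ from the equation and the $Z^{m,n}$ bound on $g$, and then converting it to radiation decay as in the proof of Proposition~\ref{prext}(ii). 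The assumption $m \gg n$ absorbs the derivative losses from the spherical Sobolev embedding used in Proposition~\ref{p:point}(i) and from the iterated commutators with $T$, $\Omega$, and $S_r$ accumulating through the induction.
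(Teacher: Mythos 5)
Your plan is essentially the paper's proof in different notation: conjugating by $e^{i\tau\la r\ra}$ and commuting with $\tau\partial_\tau$ amounts to the paper's expansion of $(\tau\partial_\tau)^l(v e^{i\tau\la r\ra})$ into $S^k v$ and $r^j(\partial_r+i\tau)^j v_{ijk}$ terms, and the central mechanism — the base bound from Proposition~\ref{p:point}, then integration of the factored radial equation $(\partial_r-i\tau)(\partial_r+i\tau)(r v_{ijk})=\mathrm{RHS}$ from infinity via the radiation condition \eqref{radpoint}, inducting on $l$ while carrying the $T,\Omega,S$ commutators so that the first-order commutator terms are actually controlled — is identical. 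One small correction: for $|\tau|\gtrsim 1$ no integration by parts in $r$ is needed (taking absolute values in the integral from infinity already yields the $|\tau|^l\la r\ra^{-1-l}$ gain, since the forcing decays like $|\tau|^l r^{-2-l}$); that device is reserved for the small-$\tau$ counterpart, Proposition~\ref{p:larger}.
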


\begin{proof}
We first observe that by Proposition~\ref{p:point}, the bounds
\eqref{pointhigh} hold with $N=m$. 
  Also, by Sobolev embeddings, $g$ satisfies the stronger
pointwise bounds
\begin{equation}
|T^i g| \lesssim \la r \ra^{-2-i}, \qquad i \leq n-2
\label{gzmn}\end{equation}

For $x$ in a compact set the exponential $e^{i \tau \la r\ra}$ 
is harmless, and on the other hand
we can write $\tau \partial_\tau$ as a smooth combination 
of $S$ and $\tau T$. Hence  the bound \eqref{pointlarge}  follows 
from \eqref{pointhigh}. It remains to prove \eqref{pointlarge} 
for large $r$.

Applying \eqref{pointhigh} for $i=0$, $j=0$, $k=0$ we directly obtain
\eqref{pointlarge} for $l=0$. For $l=1$ we compute
\[
 \tau \partial_\tau  (v e^{i \tau r})= S  v e^{i \tau r} +
 r (\partial_r+i\tau) v  e^{i \tau r} 
\]
The first term is estimated again by \eqref{pointhigh}.
Hence we need to prove that for large $r$ we have
\begin{equation}
 |(\partial_r+i\tau)  v | \lesssim \frac{|\tau|}{r^2}
\label{dv}\end{equation}
For $ r u$ we have in polar coordinates
the equation
\[
 (\partial_r^2 + \tau^2) (r  v) = r g - r^{-1} \Delta_{\omega} v 
+ r Q_{lr}  v  + r Q_{sr} v
\]
Using \eqref{pointhigh} and \eqref{gzmn} we can bound the RHS by $|\tau| r^{-2}$. Then
\[
 | (\partial_r-i\tau)(\partial_r+i\tau) (r v)| \lesssim |\tau| r^{-2} 
\]
which is integrated from infinity using the radiation condition to
obtain \eqref{dv}.  The same argument applied to the equation
\eqref{longeq} for $v_{ijk}$ instead of $v$ yields
\begin{equation}
 |(\partial_r+i\tau)  v_{ijk} | \lesssim \frac{|\tau|}{r^2}, \qquad i+4j+16k \leq m-20
\label{dvijk}\end{equation}

For $l=2$ we compute 
\[
(\tau \partial_\tau)^2 (v e^{i \tau r}) = S^2  v e^{i \tau r} + 
r (\partial_r+i\tau)  S v e^{i \tau r}+r^2  (\partial_r+i\tau)^2  
 v e^{i \tau r}
\]
The second term is handled via \eqref{dvijk}
 so it remains to prove that
\begin{equation}
 |(\partial_r+i\tau)^2  v | \lesssim \frac{\tau^2}{r^3}
\label{dv2}\end{equation}
We have
\[
\begin{split}
(\partial_r-i\tau)(\partial_r+i\tau)^2 (r v)
= &\ (\partial_r+i\tau)(r g + r^{-1} \Delta_{\omega}  v 
+ r^{-2}  v  + r Q_{sr}  v)
\\
= &\ ( g + r^{-2} \Delta_{\omega}  v 
+ r^{-3}  v  +  Q_{sr} u) + r (\partial_r+i\tau) f
\\ &\
 + r^{-1} \Delta_{\omega} (\partial_r+i\tau)  v 
+ r^{-2} (\partial_r+i\tau)  v  + r Q_{sr} (\partial_r+i\tau) v
\end{split}
\]
Since $(\partial_r+ i\tau) v $ also satisfies the radiation condition
\eqref{radpoint}, it suffices to show that the RHS admits a $\tau^2 r^{-3}$
bound. This follows from the pointwise bounds \eqref{pointhigh} for
$u$ for the terms on the first line, and \eqref{dvijk} for the terms
on the second line.

A similar computation using \eqref{longeq} also shows that for large $r$ we have 
\begin{equation}
 |(\partial_r+i\tau)^2  v_{ijk} | \lesssim \frac{\tau^2}{r^3}
, \qquad i+4j+16k \leq m-40
\label{dv2ijk}\end{equation}
The proof of \eqref{pointlarge} is completed by induction with respect to $l$.
\end{proof}

Next we consider the case of small $\tau$. Here instead 
of the stronger decay assumption at spatial infinity we consider 
a weaker condition for $g$ and establish two types of regularity
bounds with respect to the spectral parameter $\tau$:

\begin{proposition}
  Let $m$ be a large integer. Let $g$ be a $Z^{m,0}$ valued function
  of $\tau$ in $ [-1,1] $ which satisfies the additional bounds:
\begin{equation}
\|\la r \ra^n (\partial_r+ i\tau)^n T^i \Omega^j S^k g \|_{\LE^*}  \lesssim 1, \qquad n+i+j+k \leq m
\label{glowreg}\end{equation}
Then the  function $v = R_\tau g$ has the following properties:

(i) $r \lesssim  |\tau|^{-1}$. Then we have
\begin{equation}
 |(\tau \partial_\tau)^l  v| \lesssim 1, \qquad  l \ll m
\label{smallr}\end{equation}

(ii) $r \gtrsim |\tau|^{-1}$. Then we have
\begin{equation}
 |(\tau \partial_\tau)^l  (v e^{i\tau \la r \ra})| \lesssim (|\tau| \la r\ra)^{-1}, 
\qquad l \ll m
\label{larger}\end{equation}

\label{p:larger}\end{proposition}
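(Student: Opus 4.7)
The plan is to mirror the proof of Proposition~\ref{p:pointlarge}, adapted to the small-$\tau$ regime and to the weaker decay hypothesis \eqref{glowreg}. The base case $l=0$ is immediate: hypothesis \eqref{glowreg} implies in particular \eqref{oslarge}, so Proposition~\ref{p:point}(ii) applies and yields $|v|\lesssim \min\{1,(|\tau|\la r\ra)^{-1}\}$, which matches both (i) and (ii) for $l=0$. The same estimate also holds for $S^b v$ with $b\ll m$, and will be used throughout.

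For $l\geq 1$ I would use the identity $\tau\partial_\tau=r\partial_r-S$. In part (i), where $r\lesssim |\tau|^{-1}$, the oscillatory factor is of size $1$ and may be ignored, and $(\tau\partial_\tau)^l v$ expands as a linear combination of terms $(r\partial_r)^a S^b v$ with $a+b\leq l$. The terms with $a\leq 1$ are handled directly by \eqref{pointlow} at $i=0,1$. For $a\geq 2$ I would substitute from the radial PDE $P_\tau(S^b v)=(\text{source})$ written in the form \eqref{longeq}, trading each pair of radial derivatives for a $\tau^2$ term, an $r^{-2}\Delta_\omega$ term, a source contribution, and short/long-range perturbations. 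In the range $r\lesssim|\tau|^{-1}$ both $\tau^2 r^2\lesssim 1$ and $r^2|g|$ (controlled via Sobolev on $\S^2$ applied to \eqref{glowreg}) remain bounded, and an induction on $a$ closes the estimate.

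For part (ii), where $r\gtrsim|\tau|^{-1}$, I would retain the exponential. Since $S e^{i\tau r}=0$ and $\partial_r(v e^{i\tau r})=e^{i\tau r}(\partial_r+i\tau)v$, iterating gives that $(\tau\partial_\tau)^l(v e^{i\tau r})$ is a linear combination of terms $r^a(\partial_r+i\tau)^a S^b v\cdot e^{i\tau r}$ with $a+b\leq l$, modulo harmless contributions from the bounded region where $\la r\ra\neq r$. It thus suffices to prove $|(\partial_r+i\tau)^a S^b v|\lesssim (|\tau|\,r^{a+1})^{-1}$ for $a+b\leq l\ll m$. This is obtained inductively by factoring $\partial_r^2+\tau^2=(\partial_r-i\tau)(\partial_r+i\tau)$ in \eqref{longeq} (applied to $S^b v$), integrating from infinity via the outgoing radiation condition \eqref{radpoint}, and applying $(\partial_r+i\tau)$ to the equation $a$ further times. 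The matching weights $\la r\ra^n(\partial_r+i\tau)^n$ in hypothesis \eqref{glowreg} are precisely what makes this iteration close: at each step the new source term $(\partial_r+i\tau)^a g$ has enough weighted decay for the integration against the kernel of $(\partial_r-i\tau)$ to produce the required $r^{-(a+1)}$ factor.

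The main obstacle will be that the angular contribution $r^{-1}\Delta_\omega$ and the short-range operator $Q_{sr}$ in \eqref{longeq} do not commute cleanly with $(\partial_r+i\tau)$, so iterating the radial ODE produces commutator terms at every step. Fortunately their coefficients carry at least $r^{-1}$ decay, so these commutators are genuinely perturbative and can be absorbed at each induction step using \eqref{ossmall} combined with the angular Sobolev embeddings on $\S^2$ already exploited in the proof of Proposition~\ref{p:point}. With this bookkeeping in place, the induction closes for all $l\ll m$.
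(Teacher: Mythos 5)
Your proposal is correct and follows essentially the same route as the paper: the $l=0$ case from the pointwise bounds \eqref{pointlow}, part (i) by writing $\tau\partial_\tau = S_r - S$ and using the equation \eqref{longeq} to trade pairs of radial derivatives for $\tau^2 r^2$, angular, and source terms (all bounded when $r\lesssim|\tau|^{-1}$), and part (ii) by reducing to $|(\partial_r+i\tau)^a S^b v|\lesssim |\tau|^{-1}r^{-1-a}$, proved by factoring $\partial_r^2+\tau^2=(\partial_r-i\tau)(\partial_r+i\tau)$ and integrating the resulting ODE from infinity via the radiation condition \eqref{radpoint}. You also correctly identify the two points where care is needed — the integration by parts on the $rg$ source term, which is exactly where the matched weights $\la r\ra^n(\partial_r+i\tau)^n$ in \eqref{glowreg} enter, and the perturbative commutators from $r^{-1}\Delta_\omega$ and $Q_{sr}$ — both handled as in the paper.
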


\begin{proof}
To prove the proposition we rely on the estimate \eqref{pointlow} applied to the 
function $v$. For $g$ on the other hand, by Sobolev embeddings
we have the pointwise bound
\begin{equation}
 | T^i \Omega^j S^k  (g e^{i\tau \la r \ra}) | 
\lesssim \la r \ra^{-2-i}, \qquad i+j+k \leq m-2
\label{pointg}\end{equation}

(i). For $l=0$ the bound \eqref{smallr} follows directly from 
\eqref{pointlow}. Consider now the case $l=1$. For small $r$ we write
\[
 S_\tau v =  - S v +  S_r v
\]
which is again bounded by \eqref{pointlow}. Even better, \eqref{pointlow}
shows that for the functions $v_{ijk}$ we have 
\begin{equation}
| S_\tau v_{ijk}| + |S_r v_{ijk}| \lesssim 1, \qquad i+4j+16k \leq m-20. 
\label{f1}\end{equation}

For $l=2$ we write 
\[
 S_\tau^2 v_{ijk} = S^2 v_{ijk} - 2  S_r S v_{ijk}
+ S_r^2 v
\]
 The first two terms are bounded as before by \eqref{pointlow}
and \eqref{f1}. For the third we use 
the equation \eqref{longeq} for $v_{ijk}$, which gives
\begin{equation}
 |S_r^2 v_{ijk}| 
\lesssim r^2 \tau^2 |v_{ijk}| + |S_r v_{ijk}|  +  |\Omega^2 v_{ijk}| 
+  r^{-1} \sum_{a+b \leq 2} |S_r^a \Omega^b v_{\leq i,\leq j,\leq k}| 
+ r^2 |g_{\leq i,\leq j,\leq k}|
\label{rdr2}\end{equation}
We bound all the terms on the right via \eqref{pointlow} and 
\eqref{f1} to obtain
\begin{equation}
 |S_r^2 v_{ijk}| \lesssim 1, \qquad i+4j+16k \leq m-40. 
\label{f2}\end{equation}

Now we repeat the $l=2$ argument to prove by induction that
\begin{equation}
 |S_r^l v_{ijk}| \lesssim 1, \qquad i+4j+16k \leq m-20 l. 
\label{f2bis}\end{equation}
We remark that at  each application of the equation $P_\tau v = g$, 
one or two  $S_r$ factors  are replaced by either an $\Omega$ factor 
or by $r \tau$ or simply by $1$.

(ii) Again the case $m=0$ follows directly from \eqref{pointlow}.
Consider now the case $l\geq 1$. For large $r$ we compute
\[
 \tau \partial_\tau (v e^{i\tau r}) = Sv\ e^{i\tau r} + r ( \partial_r -
i \tau) v e^{i \tau r}
\]
Hence if $l=1$ then we need to show that 
\begin{equation}
 |(\partial_r +i \tau) v| \lesssim |\tau|^{-1} r^{-2}, \qquad r > |\tau|^{-1}
\label{lowk=1}\end{equation}
Repeating the computation at the second and higher levels, the 
conclusion follows if we prove the more general bound
\begin{equation}
  |(\partial_r +i \tau)^l v_{ijk}| \lesssim |\tau|^{-1} r^{-1-l},  \qquad r > |\tau|^{-1},
\quad i+4j + 16 k + 20 l \leq m
\label{lowk>1}\end{equation}

For $l=1$ we rewrite the equation for $v$ in the form
\begin{equation}
(\partial_r -i \tau)(\partial_r +i \tau) (r v) =  r^{-1} \Delta_{\omega} v
+ r Q_{lr} v  + r Q_{sr} v + rg
\label{polar}\end{equation}
By the outgoing radiation condition \eqref{radpoint} we can view this 
as an ode for $(\partial_r +i \tau) (r v)$ and integrate  from infinity.
By \eqref{pointlow}  we can bound the first 
three RHS terms by $|\tau|^{-1} r^{-2} $ and integrate
to obtain $(|\tau| r)^{-1}$ as desired. For the $g$ term a further 
integration by parts is needed. Precisely, we have
\[
 \int_{r_0}^\infty r g(r,\omega) e^{-i \tau r} dr =
- \frac{i}2 \tau^{-1} r_0 g(r_0,\omega)e^{-i \tau r_0} +
\int_{r_0}^\infty \frac{i}2 \tau^{-1} r e^{-2i \tau r} \partial_r (g(r,\omega)e^{ir\tau})  dr 
\]
Hence by \eqref{pointg} we obtain the $(|\tau| r)^{-1}$ bound,
and\eqref{lowk=1} is proved. 
We further observe that we can apply the same argument 
to $v_{ijk}$ using the equation \eqref{longeq}
to obtain
\begin{equation}
 |(\partial_r +i \tau)  v_{ijk}| \lesssim r^{-2}, \qquad r > |\tau|^{-1}
\label{lowk=1a}\end{equation}

For $l=2$ we apply another $\partial_r -i\tau$ operator in \eqref{polar}.
This yields
\[
\begin{split}
 (\partial_r^2 + \tau^2) (\partial_r + i \tau)(r v) = &\
  r^{-2} \Delta_{\omega} (\partial_r +i \tau)v
+ r Q_{lr} (\partial_r +i \tau)v  + r Q_{sr} (\partial_r +i \tau) v
\\
&\
+ Q_{lr} v  +  Q_{sr} v + (\partial_r +i \tau)(r g)
\end{split}
\]
In view of \eqref{pointlow} and \eqref{lowk=1a} we can bound all terms
on the right by $|\tau|^{-1} r^{-3}$ except for the last one, and obtain
\eqref{lowk>1} for $l=2$ integrating from infinity. For the last term,
an additional integration by parts is needed, exactly as
above. Applying the same argument to $v_{ijk}$ we also obtain
\eqref{lowk>1} for $l=2$.  The bound for higher $l$ is proved in an
inductive manner.
\end{proof}

\section{Conclusion }

By Lemma~\ref{l:cp}, the time Fourier transform $\hat u$ of the solution $u$ to the Cauchy problem \eqref{ahom} is represented as
\begin{equation}
 \hat u(\tau) = R_\tau (\tau u_0 + 2P_1 u_0 + u_1):= R_\tau(\tau f + g),
\qquad f \in  Z^{m+1,1}, \qquad g \in  Z^{m,2}.
\label{hatu}\end{equation}
The bound \eqref{pd} will be obtained from pointwise bounds 
on $\hat u(\tau) $ and its derivatives with respect to $\tau$ by reverting the Fourier transform. This requires slightly different arguments for small
and for large $\tau$. 

\subsection{The case of large $\tau$, $|\tau| \gtrsim 1$.} If $\tau$ is large, then a good low frequency approximation for $\hat{u}$ is 
$ \hat u \approx \tau^{-1} f$. Subtracting that,
we have 
\[
\hat  u (\tau) = \tau^{-1} f + u_1
\]
where
\[
 P_\tau u_1 =  (P^1 f + g) + \tau^{-1} P^2 f:= f_1+\tau^{-1}g_1
\]
with $f_1 \in H^{m,2}$ and $f_2 \in H^{m-1,3}$.
Reiterating we obtain the representation
\begin{equation}
 \hat u = \sum_{k=0}^{K-1} \tau^{-k-1} f_k + \tau^{-K} R_\tau (\tau f_K +g_K) 
\label{uexp}\end{equation}
where $f_0 = f$, $f_k \in H^{m+1-k,k+1}$, 
and $g_k \in H^{m+1-k,k+2}$. Here we choose $K$ large but with $K \ll m$.
 This leads  to a decomposition 
of the high time frequencies $P_{> 1}(|D_t|) u$ in $u$ of the form
\[
P_{> 1}(|D_t|) u = u_a + u_b, \quad \hat u_a = \chi_{>1}(|\tau|)
\sum_{k=0}^{K-1} \tau^{-k-1} f_k, \quad \hat u_b = \chi_{>1}(|\tau|)
\tau^{-K} R_\tau (\tau f_K +g_K). 
\]
For the summands we have $|f_k| \lesssim \la r\ra ^{-3-k}$ therefore 
applying an inverse Fourier transform  we obtain
\begin{equation}
 |u_a| \lesssim \la t \ra^{-N} \la r\ra^{-3}
\label{uabd}\end{equation}
 For the resolvent expression in $u_b$ we use Proposition~\ref{p:pointlarge}. We obtain
\[
 | (\tau \partial_\tau)^l (\hat u_b(\tau)e^{i\tau r})| \lesssim  |\tau|^{-K+l} 
\la r \ra^{-1-l}.
\]
Then, inverting again the Fourier transform,
we obtain the following bound:
\begin{equation}
|u_b| \lesssim  \la r\ra^{-1} \la t-r \ra^{1-K}
\label{ubbd}\end{equation}
Together, \eqref{uabd} and \eqref{ubbd} give the high frequency part
of \eqref{pd}.

To bound the high frequency part of $\partial_t u$ the same argument
applies with one exception, namely the first term $f \chi_{> 1}(\tau)$
arising in the expression for $\widehat{\partial_t u_a}$.  However,
such a term is natural since $u$ is extended by $0$ to negative times
therefore $\partial_t u$ contains a $ f \delta_{t=0}$ component. Factoring
that out we obtain as desired
\begin{equation}
|P_{> 1}(|D_t|) (\partial_t u_a - f \delta_{t=0})| \lesssim \la t \ra^{-N} \la r\ra^{-4},
\label{uabdt}\end{equation}
i.e. the high frequency part of \eqref{pdt}.

\subsection{The case of small $\tau$, $|\tau| \ll 1$.}
Here we will still need the arguments from the large $\tau$ case, but there
is an additional layer of the proof, given by the resolvent expansion 
around $\tau=0$ provided by Proposition~\ref{pusmall}. Since $f \in Z^{m+1,1}$ and $g \in Z^{m,2}$, by applying Proposition~\ref{pusmall} (a)
for $f$ and Proposition~\ref{pusmall} (b) for $g$ we obtain the 
representation
\[
\hat u(\tau) = (v + \tau v_1 + \tau e(r,\tau)) e^{i\tau r} + 
R_\tau ( \chi_{> |\tau|^{-1}} (\tau f+g)) + \tau^2 R_\tau h
\]
with $h$ as in \eqref{hbd-tau}. Correspondingly we decompose
\[
 P_{< 1}(|D_t|) u =  u_a + u_b + u_c + u_d 
\]
where
\[
 \hat u_a(\tau) = \chi_{< 1}(|\tau|) R_\tau ( \chi_{> |\tau|^{-1}} (\tau f+g)),
\qquad \hat u_b (\tau)= \tau^2 \chi_{< 1}(|\tau|) R_\tau h
\]
\[
 \hat u_c (\tau)= \chi_{< 1}(|\tau|) (v + \tau v_1)e^{i\tau r},
\qquad 
\hat u_d (\tau)= \tau \chi_{< 1}(|\tau|) e(r,\tau)) e^{i\tau r}
\]

For the first term $u_a$ we use an expansion as in 
\eqref{uexp} to write
\[
 \hat u_a  = \chi_{< 1}(|\tau|) \sum_{k=0}^{K-1} \tau^{-k-1} \chi_{> |\tau|^{-1}} f_k + \tau^{-K} \chi_{< 1}(|\tau|) R_\tau (\chi_{> |\tau|^{-1}}(\tau f_K +g_K)) 
\]
Since $f_k \in H^{m+1-k,k+1}$ and $g_k \in H^{m+1-k,k+2}$, the second
term above can be included in $\hat u_b$. We remark that the 
spatial cutoff $\chi_{> |\tau|^{-1}}$ plays an essential role,
as it allows the spatial decay of $f_K$ and $g_K$ to compensate
for the $\tau^{-K}$ factor. Hence we redenote
\[
 \hat u_a  = \chi_{< 1}(|\tau|) \sum_{k=0}^{K-1} \tau^{-k-1} \chi_{> |\tau|^{-1}} f_k
\]
Using the pointwise bounds $|f_k| \lesssim \la r \ra^{-3-k}$ we 
invert the Fourier transform to obtain as in the large $\tau$ case 
\[
 |u_a| \lesssim \la r \ra^{-3} \left( \frac{\la r \ra}{\la r\ra +t}
\right)^{N}, \qquad  |\partial_t u_a - P_{< 1}(|D_t|) f\delta_{t=0}| \lesssim \la r \ra^{-3} \left( \frac{\la r \ra}{\la r\ra +t}
\right)^{N}
\]

For $u_b$ we use Proposition~\ref{p:larger} to obtain
\[
 |(\tau \partial_\tau)^l \hat u_b| \lesssim \tau^2, \qquad |\tau| < \la r \ra^{-1}
\]
and
\[
 |(\tau \partial_\tau)^l (\hat u_b e^{ir\tau})| \lesssim \tau r^{-1}, \qquad |\tau| > \la r \ra^{-1}
\]
which lead to
\[
 | u_b| \lesssim \la r \ra^{-1} \la t- r \ra^{-2}, \qquad 
| \partial_t u_b| \lesssim \la r \ra^{-1} \la t- r \ra^{-3}
\]

For $u_c$ we have the pointwise bounds $|v_0| \lesssim 1$, $|v_1| \lesssim 1$ therefore we easily get
\[
 |u_c| + |\partial_t u_c| \lesssim \la r \ra^{-1} \la t- r \ra^{-N}
\]
 
Finally we consider $u_d$, which in this analysis represents the 
leading term of the contribution to $u$ of the long range terms
in the operator $P$.

We recall that the expression $e(r,\tau)$ has the form
\[
 e(r,\tau) = \la r\ra^{-1} e_1(r \wedge |\tau|^{-1}) + \tau 
(e_2(r \wedge |\tau|^{-1}) - e_2(|\tau|^{-1})), \qquad e_1, e_2 \in S(\log r)
\]
We represent $e_1$ and $e_2$ in the form
\[
 e_{12}(r) = \sum_{k \geq 0} c_k \chi_{>k}(r), \qquad |c_k| \lesssim 1,
\]
where $ \chi_{>k}$ are  bump functions which equal $0$ for
$r \ll 2^m$, equal $1$ for $ r \gg 2^m$ and are smooth on the 
$2^m$ scale.We denote by $e_k$ the contributions of each summand to $e$ 
and consider three cases depending on the size of $r$:

i) $ r \ll 2^{k}$. Then $e_k$ has the form
\[
 e_k(r,\tau) = c_k \tau^2 \chi_{<2^{-k}}(|\tau|)
\]
therefore its corresponding output in $u_d$ 
satisfies 
\[
 |u_d^k| + 2^{k} |\partial_t u_d^k|\lesssim 2^{-3k} (1+ 2^{k} t)^{-N}
\]
which leads to \eqref{pd} and \eqref{pdt} after summation
with respect to $k$ with $2^k > r$.

ii) $r \gg 2^k$. Then $e_k$ has the form
\[
 e_k(r,\tau) = c_k \tau \la r \ra^{-1} \chi_{< 2^{-k}}(|\tau|)
\]
therefore its corresponding output in $u_d$ 
satisfies 
\[
 |u_d^k| + 2^{k} |\partial_t u_d^k|\lesssim \la r \ra^{-1} 2^{-2k} (1+ 2^{k} t)^{-N}
\]
which leads again to \eqref{pd} and \eqref{pdt} since now we are 
summing  with respect to $k$ for $2^k < r$.

iii) $r \approx 2^k$. Here we get terms as in both (i) and (ii),
but there is no summation with respect to $k$.

The proof of Theorem~\ref{maint} is concluded.

\bibliography{gr}
\bibliographystyle{plain}

\end{document}